\newcommand{\R}{\mathbb{R}}  
\newcommand{\BB}{\mathbb{B}}    
\newcommand{\N}{\mathbb{N}} 
\renewcommand{\S}{\mathbb{S}}
\newcommand{\dysk}{D}   
\newcommand{\ann}{A}
\newcommand{\uu}{\mathcal{U}}
\newcommand{\vv}{\mathcal{V}}
\def\w{\widetilde}
\newcommand{\ve}{\varepsilon}
\newcommand{\eps}{\varepsilon}
\newcommand{\vp}{\varphi}
\newcommand{\h}{\mathcal{H}}
\newcommand{\brac}[1]{\left({#1}\right)}
\newcommand{\bracb}[1]{\bigl({#1}\bigr)}
\newcommand{\norm}[1]{\left\|{#1}\right\|}
\newcommand{\arccot}{\mathop{\mathrm{arc\, cot}}}
\newtheorem{thm}{Theorem}[section]
\newtheorem{lem}[thm]{Lemma}
\newtheorem{cor}[thm]{Corollary}
\newtheorem{defi}[thm]{Definition}
\newtheorem{rem}[thm]{Remark}
\numberwithin{equation}{section}
\newcommand{\invisible}[1]{}
\title[The Lavrentiev gap for harmonic maps into spheres]{The Lavrentiev gap phenomenon \\for harmonic maps into spheres 
\\ holds on a dense set of zero degree boundary data}
\author{Katarzyna Mazowiecka \&\ Paweł Strzelecki}
\date{\today}
\begin{document}

	\begin{abstract}
		
		\baselineskip 15pt
		
	We prove that for each positive integer $N$ the set of smooth, zero 
	degree maps $\psi\colon\S^2\to \S^2$ which have the following 
	three properties: 
	\begin{enumerate}
	\item[(i)] there is a unique minimizing harmonic map 
		$u\colon \BB^3\to \S^2$ which agrees with $\psi$ on 
		the boundary of the unit ball;
	\item[(ii)] this map $u$ has at least $N$ singular points in $\BB^3$;
	\item[(iii)] the Lavrentiev gap phenomenon holds for $\psi$, i.\,e., 
		the infimum of the Dirichlet energies $E(w)$ of all smooth 
		extensions $w\colon \BB^3\to\S^2$ of $\psi$ is strictly larger 
		than the Dirichlet energy  $\int_{\BB^3} |\nabla u|^2$ 
		of the (irregular) minimizer $u$,
	\end{enumerate}
	is dense in the set $\mathcal{S}$ of all smooth zero degree maps 
	$\phi\colon \S^2\to\S^2$ endowed with the $W^{1,p}$--topology, 
	where $1\le p<2$. This result is sharp: it fails in the 
	$W^{1,2}$--topology of $\mathcal{S}$. 
\end{abstract}

\maketitle

\parskip 3pt

\section{Introduction}

In this note, we revisit a well-known topic, the study of singularities of 
maps $u\colon \BB^3\to\S^2$ which minimize the Dirichlet integral
\begin{equation}
\label{Dirichlet}
E(u)=\int_{\BB^3}|\nabla u|^2 dx\, , \qquad u\in W^{1,2}(\BB^3,\S^2)
\end{equation}
under a prescribed boundary condition $u\!\mid_{\partial\BB^3}\, =\varphi\colon \S^2\to\S^2$. Here, $\BB^3$ stands for the open unit ball in $\R^3$, $\S^2$ is the unit sphere, and
\[
W^{1,2}(\BB^3,\S^2)=\{v=(v_1,v_2,v_3)\in W^{1,2}(\BB^3,\R^3)\colon |v(x)|=1 \text{ for a.e. }x\in \BB^3\}\, .
\]
Moreover, for a map $\varphi$ in the fractional Sobolev space $H^{1/2}(\S^2,\S^2)$ we write
\[
W^{1,2}_\varphi(\BB^3,\S^2)=\{v\in W^{1,2}(\BB^3,\S^2)\colon 
v\!\mid_{\partial\BB^3}=\varphi \text{ in the trace sense}\}\, .
\]
Minimizers of the Dirichlet integral \eqref{Dirichlet} in $W^{1,2}_\varphi(\BB^3,\S^2)$ satisfy the Euler--Lagrange system
\begin{equation}
\label{H}
\left\{
\begin{array}{rcl}
-\Delta u & = & |\nabla u|^2 u\qquad\mbox{in $\BB^3$,}\\
u\!\mid_{\partial\BB^3}\, & = &\varphi\, .
\end{array}
\right.
\end{equation}
The main motivation behind the present work was to reach a deeper understanding of the mechanisms governing the onset of singularities of solutions, and the cardinality and structure of the set of minimizing solutions for a~fixed boundary condition. We also wanted to know whether the \emph{Lavrentiev gap phenomenon}, cf.\ \eqref{HLgap} below, occurs typically (in a precise topological meaning).  Despite the work of numerous experts over the last three decades, this topic is still not fully understood. Our main result states, roughly speaking, that even in the case when there is no purely topological reason for the solution of \eqref{H} to be discontinuous, singularities of $u$ \emph{do} occur under arbitrarily small (in the $W^{1,p}$ sense, for $1\le p<2$) perturbations of an \emph{arbitrary} smooth boundary data $\varphi$.

Before giving formal statements of the results, let us sketch a broader perspective.

When $\deg\varphi\not=0$, all solutions of \eqref{H} in $W^{1,2}_\varphi(\BB^3,\S^2)$ obviously have singularities, as $\varphi$ has no continuous extension $u\colon \BB^3\to \S^2$. By a celebrated classic theorem of Schoen and Uhlenbeck \cite{SU1} the singular set of a \emph{minimizing} solution of \eqref{H} consists of isolated points. By another theorem of Almgren and Lieb \cite{AL}, if the boundary condition $\varphi$ has square integrable derivatives on $\S^2$, then the number of these points does not exceed a constant multiple of the \emph{boundary energy} $\int_{\S^2} |\nabla_T \varphi|^2 d\sigma$. (Non-minimizing solutions can behave in a wild way: Rivi\`{e}re \cite{Riv} proves that for any non-constant boundary data $\varphi$ there exists an everywhere discontinuous solution of the harmonic map system \eqref{H}.)

However, even when $\varphi\colon \S^2\to\S^2$ satisfies $\deg\varphi=0$ --- so that a priori there is no topological obstruction for a map $u\in W^{1,2}_\varphi(\BB^3,\S^2)$ to be continuous --- minimizers of $E$ in $W^{1,2}_\varphi(\BB^3,\S^2)$ might be singular because this is energetically preferable. Hardt and Lin~\cite{HL} give an example of a smooth zero degree boundary data $\widetilde\varphi\colon \S^2\to\S^2$ which is $H^{1/2}$-close to a constant map and has the following properties:
\begin{enumerate}
	\renewcommand{\labelenumi}{{\rm (\alph{enumi})}}
	\item Each minimizer $v$ of $E$ in $W^{1,2}_{\widetilde\varphi}(\BB^3,\S^2)$ has at least $N$ singular points (the number $N$ can be prescribed a priori);
	\item The Lavrentiev gap phenomenon holds for $E$ in $W^{1,2}_{\widetilde\varphi}(\BB^3,\S^2)$. By this, we mean the following inequality:
	\begin{equation}
		\label{HLgap}
		\mu(\widetilde\varphi) := \min_{W^{1,2}_{\widetilde\varphi}(\BB^3,\S^2)} E(u) < \overline\mu(\widetilde\varphi) :=
		\inf_{W^{1,2}_{\widetilde\varphi}(\BB^3,\S^2)\cap C^0(\overline{\BB}^3)} E(u).
	\end{equation}
\end{enumerate}
An immediate consequence of \eqref{HLgap} is that $W^{1,2}_{\widetilde\varphi}(\BB^3,\S^2)\cap C^0(\overline{\BB}^3)$ is not dense in $W^{1,2}_{\widetilde\varphi}(\BB^3,\S^2)$.

As Bethuel, Brezis and Coron have shown, cf. \cite[Theorem~5]{BBC}, for boundary conditions $\varphi$ of zero degree, the Lavrentiev gap phenomenon is equivalent to the fact that all minimizing harmonic maps in $W^{1,2}_{\varphi}(\BB^3,\S^2)$ have singularities. 
%If this is the case, then one can also infer that there are infinitely many different solutions of the harmonic map system \eqref{H} for the given boundary condition $\varphi$, cf. \cite[Theorem~6]{BBC} (see also Pakzad \cite[Theorem~1]{Pakzad}). 
Other examples of unexpected and counterintuitive behavior of singularities of minimizing harmonic maps have been given by Almgren and Lieb in \cite{AL}. In particular, a minimizer $u$ of $E$ in $W^{1,2}_{\varphi}(\BB^3,\S^2)$ can have a large number of singular points even if $\det\nabla_T \varphi\equiv 0$ on $\S^2$ and $\varphi$ maps the whole sphere $\S^2$ to a smooth curve $\gamma$. The abstract of \cite{AL} ends with the phrase: ``in particular, singularities in $u$ can be unstable under small perturbations of $\varphi$.''\footnote{We have just changed Almgren and Lieb's notation from $\varphi,\psi$ to our $u$, $\varphi$ respectively.}

Our main result ascertains that the message of the last sentence, \emph{singularities can be unstable}, may be strengthened, i.\,e., replaced with a firm \emph{singularities \emph{are} unstable}, at least when one takes into account small perturbations of the boundary data in the topology of each of the space $W^{1,p}$, $1\le p<2$. Here is the precise statement.

\begin{thm} 
\label{main:thm}	
	Assume that $\varphi\in C^\infty(\S^2,\S^2)$ is an arbitrary smooth map with $\deg \varphi =0 $ and $1\le p<2$. Then, for each $\eps>0$ and each $N\in \N$ there exists a map $\widetilde\varphi\in C^\infty(\S^2,\S^2)$ such that
	\begin{enumerate}
	\renewcommand{\labelenumi}{{\rm (\roman{enumi})}}
		\item $\deg\widetilde\varphi=0$;
		\item $\|\varphi-\widetilde\varphi\|_{W^{1,p}} < \eps$ and ${\mathcal H}^2\big(\{x\in \S^2\colon \varphi(x)\not= \widetilde\varphi(x)\}\big)< \eps$;
		\item the Dirichlet integral $E$ has precisely one minimizer $\widetilde u\in W^{1,2}_{\widetilde\varphi}(\BB^3,\S^2)$; moreover, $\widetilde u$ has at least $N$ point singularities in $\BB^3$.
	\end{enumerate}
\end{thm}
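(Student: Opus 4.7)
The plan is to modify $\varphi$ only on an arbitrarily small geodesic disk $D_r(x_0)\subset \S^2$ by pasting in a miniaturized copy of the boundary data constructed by Hardt and Lin in \cite{HL}, exploiting the fact that for $p<2$ the $W^{1,p}$-norm of a radially rescaled map in dimension two scales as a positive power of the scale, whereas the critical $W^{1,2}$-norm does not. This criticality gap is precisely what opens the door to perturbations small in $W^{1,p}$ and yet topologically rich enough to force a prescribed number of interior singularities and the Lavrentiev gap; it is also what makes the result sharp, since at $p=2$ one cannot shrink the bump below a fixed energy threshold.

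Concretely, fix a point $x_0\in \S^2$, identify a small geodesic disk $D_r(x_0)$ of radius $r$ with the flat disk $\dysk_r\subset \R^2$ via a conformal chart, and let $\psi_0\colon \S^2\to \S^2$ be a Hardt--Lin boundary map of degree zero, smooth, supported in a fixed Euclidean disk, equal to a constant $a\in \S^2$ elsewhere, for which every $E$-minimizer on $\BB^3$ has at least $N$ singular points and produces the Lavrentiev gap \eqref{HLgap}. After an isometry of the target we may assume $a=\varphi(x_0)$. For $\lambda \ll r$, define $\widetilde\varphi$ to equal $\varphi$ outside $\dysk_r$, to equal $\psi_0(x/\lambda)$ on $\dysk_\lambda$, and in the collar $\lambda<|x|<r$ to smoothly interpolate between $\varphi(x_0)$ on the inner circle and $\varphi$ on the outer circle through a homotopy of small loops in $\S^2$; such a homotopy exists since $\varphi$ is smooth and the oscillation of $\varphi$ on $\partial \dysk_r$ is $O(r)$. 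Then $\deg\widetilde\varphi=\deg\varphi+\deg\psi_0=0$, which establishes~(i).

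Property~(ii) follows by scale counting: under the substitution $y=x/\lambda$,
\[
\int_{\dysk_\lambda}\bigl|\nabla\psi_0(x/\lambda)\bigr|^p\,dx = \lambda^{\,2-p}\int_{\dysk_1}|\nabla\psi_0|^p\,dy,
\]
which tends to $0$ as $\lambda\to 0$ precisely because $p<2$; the collar contribution can be controlled by a standard logarithmic cutoff, and $\mathcal{H}^2(\{\varphi\neq\widetilde\varphi\})\le \pi r^2<\eps$ once $r$ is taken small enough. For~(iii), scale-invariance of the two-dimensional boundary energy together with the $\lambda$-linear scaling of the 3D Dirichlet integral restricted to the half-ball around $x_0$ implies that the energy asymmetry between singular and smooth competitors produced by $\psi_0$ survives the rescaling, so both the $N$ forced singularities and the strict Lavrentiev gap remain; the equivalence between the gap and the fact that every minimizer is singular is then \cite[Theorem~5]{BBC} applied to $\widetilde\varphi$, and the $\ge N$ lower bound on singularities can be read off from a Hardt--Lin-style energy accounting localized in the neighborhood of $x_0$.

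The main obstacle is the \emph{uniqueness} assertion in~(iii): the Hardt--Lin argument asserts only that \emph{every} minimizer has $\ge N$ singularities, not that there is only one. The plan here is to perform one further arbitrarily small perturbation of $\widetilde\varphi$ within $C^\infty\cap W^{1,p}$ so as to reach a generic boundary datum for which the minimizer is unique. Two routes are available: either a second-variation/transversality analysis around the finitely many singularities of a given minimizer, showing that the set of unique minimizers forms a residual $G_\delta$ in a sufficiently small $W^{1,p}$-ball; or an abstract Baire/Kuratowski argument applied to the upper semicontinuous set-valued map $\psi\mapsto \operatorname{argmin}_{W^{1,2}_\psi} E$ on a suitable complete metric subspace containing $\widetilde\varphi$. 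Since this last step can be performed without altering $\deg\widetilde\varphi$ or enlarging the support of the modification, properties~(i) and~(ii) are preserved, and the theorem follows.
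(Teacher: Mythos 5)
Your overall strategy --- insert a small, degree--balanced bubble configuration into $\varphi$ near a point and exploit the subcritical scaling of the $W^{1,p}$--norm for $p<2$ --- is indeed the same in spirit as the paper's, and your scaling computation for (ii) is essentially the content of Lemma~\ref{phi1phi2}. But the two steps that carry the real weight of the theorem are gaps in your write-up. First, the claim that the Hardt--Lin mechanism ``survives the rescaling'' on a \emph{non-constant} background is exactly the nontrivial point, not a routine localization. Hardt and Lin's energy accounting (and the equivalence of \cite[Theorem~5]{BBC}, which in any case only yields that all minimizers are singular, not that they have $\ge N$ singularities) is carried out for a constant background; once the rescaled datum is pasted into an arbitrary $\varphi$, minimizers of the new problem need not resemble minimizers of the model problem near $x_0$, and one must prove that their local energy near each inserted bubble is small while regularity there would force it to be large. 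This is what Theorem~\ref{sing} (the reworked Almgren--Lieb installation theorem) provides: strong $H^1$ convergence of the perturbed minimizers, the boundary monotonicity bound $\int_{B(q,2r)}|\nabla u|^2<2\pi r$, and the degree/co-area lower bound $\ge 7\pi r$ in the absence of a singularity. The paper also prepares the data by first making it \emph{constant} on two small caps (Definition~\ref{vp1}, Lemma~\ref{vp1estimation}), placed at antipodal points $\pm q$ with $\varphi(q)=\varphi(-q)$ (a Borsuk--Ulam--type argument) so that oppositely oriented bubbles can be inserted while keeping the degree zero and the modification confined to two small caps. Your proposal asserts the conclusion of this machinery without supplying it.

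Second, the uniqueness assertion in (iii) cannot be obtained by the generic/Baire or transversality arguments you sketch: what is known (Theorem~\ref{uniqueness}, i.e.\ \cite[Theorem~4.1]{AL}) is that boundary data with unique minimizers are \emph{dense}, not residual, and the paper explicitly states that genericity questions of this kind remain open; moreover an abstract ``further small perturbation'' would still have to preserve smoothness, $\deg=0$, the measure bound in (ii), and --- crucially --- the $\ge N$ singularities, none of which follows from single-valuedness of an argmin map on a residual set. The paper instead uses the concrete Almgren--Lieb device (Lemma~\ref{lemourphi}): enlarge $\BB^3$ by a tiny flat bump away from the bubbles, take \emph{any} minimizer on the enlarged domain and restrict it to $\BB^3$; analyticity (Lemma~\ref{unilem}) gives uniqueness for the restricted boundary datum $\vp_3$, and the uniform boundary regularity theorem (Theorem~\ref{UBC}) controls both the degree of $\vp_3$ and its $H^1$ distance to $\vp_2$, after which the installation argument is run for the final datum. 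Without these two ingredients your argument establishes only the easy parts of the theorem.
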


Combining the above result with Bethuel, Brezis and Coron, \cite[Theorems 5--6]{BBC}, one immediately obtains the following.

\begin{cor} Assume that $\varphi\in C^\infty(\S^2,\S^2)$ and $\deg \varphi =0 $. Let $\widetilde\varphi\in C^\infty(\S^2,\S^2)$ be given by Theorem~\ref{main:thm}. Then the Lavrentiev gap phenomenon \eqref{HLgap} holds for $\widetilde\varphi$.
%and the harmonic map system \eqref{H} with $\varphi$ replaced by $\widetilde\varphi$ has infinitely many solutions.
\end{cor}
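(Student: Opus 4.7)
The plan is to invoke directly the equivalence of Bethuel, Brezis, and Coron \cite[Theorems 5--6]{BBC} recalled in the introduction: for a boundary datum of degree zero on $\S^2$, the strict inequality \eqref{HLgap} is equivalent to the statement that \emph{every} minimizer of $E$ in $W^{1,2}_{\widetilde\varphi}(\BB^3,\S^2)$ has at least one singular point in $\BB^3$. The whole task is therefore reduced to verifying the two hypotheses of this equivalence for the map $\widetilde\varphi$ produced by Theorem~\ref{main:thm}.

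First, part~(i) of Theorem~\ref{main:thm} ensures that $\deg \widetilde\varphi = 0$, so we are in the topologically trivial regime where the BBC equivalence is available. Second, part~(iii) of Theorem~\ref{main:thm} asserts two things simultaneously: the minimizer $\widetilde u$ is \emph{unique} in $W^{1,2}_{\widetilde\varphi}(\BB^3,\S^2)$, and it carries at least $N$ point singularities. Taking $N\ge 1$ in the application of Theorem~\ref{main:thm} (which is allowed, since $N\in\N$ is arbitrary), we conclude that the one and only minimizer of $E$ subject to boundary data $\widetilde\varphi$ is discontinuous, and hence that \emph{every} minimizer is singular. The BBC equivalence then yields $\mu(\widetilde\varphi) < \overline\mu(\widetilde\varphi)$, which is precisely \eqref{HLgap}.

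There is no genuine obstacle to overcome: the corollary is an immediate consequence of Theorem~\ref{main:thm} combined with \cite[Theorems 5--6]{BBC}. The substantive work has already been absorbed into Theorem~\ref{main:thm}, where the construction simultaneously pins down a \emph{unique} minimizer and forces it to be singular. It is exactly the uniqueness clause in (iii) that makes the passage from the quantified statement ``every minimizer is discontinuous'' (needed to apply \cite[Theorems 5--6]{BBC}) to the existential statement ``the minimizer given by Theorem~\ref{main:thm} is discontinuous'' trivially legitimate.
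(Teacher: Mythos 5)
Your proposal is correct and follows exactly the paper's route: the authors likewise deduce the corollary immediately from Theorem~\ref{main:thm} together with the Bethuel--Brezis--Coron equivalence \cite[Theorems 5--6]{BBC}, using that the unique minimizer guaranteed by (iii) is singular (for $N\ge 1$), so that every minimizer for $\widetilde\varphi$ is singular and the gap \eqref{HLgap} follows.
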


It is a natural question whether the occurrence of such boundary data is a \emph{typical property} in the class of all maps of degree zero, i.\,e., whether the set of mappings $\widetilde\varphi\colon \S^2\to\S^2$ such that conditions (i) and (iii) of Theorem~\ref{main:thm} hold \emph{contains a countable intersection of open and dense sets of maps of zero degree} in $H^{1/2}$ (or in some other topology). In spite of some efforts, we have not been able to settle that question.

The main novelty of Theorem~\ref{main:thm} and its proof is that (1) we show  that the singularities are unstable in a generic sense,
(2) in order to achieve that, we show how to combine an appropriately
modified idea of Hardt and Lin, applied by them only to
\emph{constant} boundary conditions $\phi\colon \S^2\to\{\ast\}$, with
a revisited version of  Almgren and Lieb's method of installing new
singular points, see \cite[Theorem~4.3]{AL}. A bridge between these
two ingredients is provided by a brief topological argument which
guarantees that for each boundary condition $\varphi$ with
$\deg\varphi=0$ there exist two antipodal points $\pm q\in \S^2$ such
that $\varphi$ maps them to the same point of $\S^2$,
$\varphi(q)=\varphi(-q)$. We select any pair of such points and,
roughly speaking, show how to insert many tiny bubbles into $\varphi$
close to those two antipodal points to obtain the new boundary
condition $\tilde\varphi$.  This way, $\varphi$ is changed only in two
little spherical caps centered at $\pm q\in\S^2$, so that the second
statement in (ii) in Theorem \ref{main:thm} does hold.

%The overall plan of the proof of Theorem~\ref{main:thm} is as follows. We first revisit Almgren and Lieb's method of installing new singular points, see \cite[Theorem~4.3]{AL}. Introducing appropriate modifications of their method, we select two antipodal points $\pm p\in \S^2$ such that $\varphi$ maps them to the same point of $\S^2$, $\varphi(p)=\varphi(-p)$ (the existence of such antipodal points is guaranteed by the assumption $\deg\varphi=0$), and insert many tiny bubbles into $\varphi$ close to those two points to obtain the new boundary condition $\tilde\varphi$. (In a sense, this is an imitation of the main idea of Hardt and Lin, applied by them to a \emph{constant} boundary condition $\phi\colon \S^2\to\{\ast\}$.) This way, $\varphi$ is changed only in two little spherical caps centered at $\pm p\in\S^2$, so that the second statement in (ii) in Theorem \ref{main:thm} does hold. 

To control the degree of $\tilde\varphi$ and to guarantee the uniqueness of minimizers of the Dirichlet integral in $W^{1,2}_{\widetilde\varphi}(\BB^3,\S^2)$, we employ the uniform boundary regularity of minimizing harmonic maps combined with the fact that harmonic maps are real analytic in the interior of the regular set. 

Finally, the distance from $\varphi$ to $\widetilde\varphi$ in $W^{1,p}$ is estimated by a technical, explicit analysis of the small bubbles. It is crucial here that $p<2$: the computations in Lemma~\ref{phi1phi2} break down for $p=2$, and an application of Almgren and Lieb's \cite[Theorem 2.12]{AL} shows that Theorem~\ref{main:thm} indeed fails for  $p=2$, see Remark~\ref{opt}. On the other hand, Hardt and Lin's Stability Theorem \cite{HL2} asserts that for a Lipschitz boundary mapping $\psi$ with unique energy minimizer $v$, each minimizer $u$ for a boundary mapping $\widetilde{\psi}$ sufficiently close to $\psi$ in the Lipschitz norm has the same number of singularities as $v$. In that sense, the $W^{1,p}$ topology for $1\le p<2$ in Theorem~\ref{main:thm} is optimal.

 \smallskip
The notation throughout the paper is standard. $B(x_0,r)=\{x\in\R^3:|x-x_0|<r\}$ is the standard Euclidean open ball. We write
\[
\partial E(\vp)=\int_{\S^2}|\nabla_T\vp|^2d\sigma
\]
to denote the \emph{boundary energy} of a map $\varphi\colon \S^2\to\S^2$. For a map $u\colon \BB^3\to \S^2$ we set
\[
J(x)\equiv J(u)(x) = \sqrt{\det \big(Du(x)Du(x)^T\big)}.
\]
If the rank of $Du(x)$ is maximal, i.\,e., equal to 2, then $J(u)(x)$ measures how $Du(x)\!\!\mid_V$, where $V$ is the orthogonal complement of $\ker Du(x)$, distorts the surface measure: for  an arbitrary ball $B$ centered at $x$, the Jacobian $J(u)(x)$ is equal to the ratio of $\h^2(Du(x) B)$ to $\h^2(B \cap V)$.

\section{Installing new singularities}

We start with a theorem of Almgren and Lieb, see \cite[Theorem  4.3]{AL}, which describes how to modify the boundary mapping so that its energy minimizer would have a singularity and the energy of the new minimizer would be almost the same as the energy of the initial one. This result will serve as a main tool in constructing $\w{\vp}$ in the proof of Theorem~\ref{main:thm}. 

Before giving the statement, we introduce the notation which will be useful in several places below.

\begin{defi}\label{singinstal}
	For a fixed map $\psi:\S^2\rightarrow\S^2$, which is smooth near a point $q\in\S^2$, and for a fixed number $\varrho>0$, we let $[\psi]_{q,\varrho}\colon\S^2\to\S^2$ denote any smooth boundary map which arises from $\psi$ by a small deformation in a neighborhood of $q$ so that the following four conditions are satisfied:
	\begin{itemize}
	  \item [(a)] $[\psi]_{q,\varrho}(x)=\psi(x)$ whenever $|x-q|\ge\varrho$;
	  \item [(b)] $[\psi]_{q,\varrho}(x)\equiv\psi(q)$ if $|x-q|=\varrho/2$;
	  \item [(c)] The restriction of $[\psi]_{q,\varrho}$ to the annular region $\frac\varrho 2<|x-q|<\varrho$ satisfies the Lipschitz condition with a Lipschitz constant $L$ which depends only on $\psi$ and \underline{not on} $\varrho$;
	  \item [(d)] $[\psi]_{q,\varrho}$ is a diffeomorphism of the spherical cap $\{|x-q|<\varrho/2\} \, \cap\, \S^2$ onto the punctured sphere $\S^2\setminus\{\psi(q)\}$ such that the boundary Dirichlet integral energy of $[\psi]_{q,\varrho}$ on this cap equals $8\pi+\text{o}(1)$ as $\varrho\rightarrow 0$.
	 \end{itemize}
\end{defi}

It is well known that such maps exist, e.g. a modification of the mapping obtained in \cite[Appendix A.2]{ABL}. If we identify the spherical cap from (d) with a disc and assume that $\psi(q)=(0,0,1)=N$ we can map a concentric smaller disc to the whole sphere without two spherical caps centered at $N$ and $-N$ consisting of points whose angular distance (in radians) from the point $N$ are smaller or equal $\frac\varrho 2$ and from the point $-N$ are greater or equal $\pi-\frac\varrho 2$, respectively. To do this we use a properly rescaled and rotated inverse stereographic projection. It is a smooth conformal mapping and therefore its Dirichlet energy is equal twice the Hausdorff measure of the image (and hence approaches $2\cdot4\pi$ as $\varrho\rightarrow0$). The remaining annuli from the domain can be mapped into the punctured spherical cap left in the image without changing the Dirichlet integral too much. For details see the proof of Lemma \ref{phi1phi2}.

We shall sometimes say that $[\psi]_{q,\varrho}$ arises from $\psi$ by inserting a smooth bubble at $q$. 

\begin{thm}\label{sing}
 Suppose $u:\BB^3\rightarrow\S^2$ is a minimizer which is unique for its boundary mapping $\psi:\S^2\rightarrow\S^2$ and which has an interior singularity at $p\in\BB^3$. Assume $\psi$ has finite boundary Dirichlet integral energy and is smooth near $q\in\S^2$ and let $\psi_j:\S^2\rightarrow\S^2$ be any sequence of continuous boundary mappings such that $\psi_j=[\psi]_{q,2/j}$ for all $j$ sufficiently large.

Finally, let $u_j$ be any minimizer in $\BB^3$ with boundary mapping $\psi_j$. Then, for all sufficiently large $j$, the mapping $u_j$ will have at least two interior singular points $q_j$ and $p_j$ such that $q_j\rightarrow q$ and $p_j\rightarrow p$ as $j\rightarrow\infty$.
\end{thm}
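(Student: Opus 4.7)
My plan combines compactness of minimizing harmonic maps with a concentration-compactness analysis of the inserted boundary bubble; once convergence is established, the two singularities are localized by energy-plus-topology arguments.

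The first step is to bound the energies uniformly and identify the weak limit. By comparing $u_j$ to an explicit competitor that equals $u$ outside $B(q,2/j)$ and inserts on $B(q,2/j)\cap\BB^3$ the singular inverse-stereographic extension of the bubble, property (d) of Definition~\ref{singinstal} yields $E(u_j)\le E(u)+8\pi+\mathrm{o}(1)$. Extract a subsequence with $u_j\rightharpoonup u^\infty$ weakly in $W^{1,2}(\BB^3,\S^2)$; since $\psi_j\to\psi$ in $L^2(\S^2)$ (the difference is supported on a cap of radius $2/j$), the trace of $u^\infty$ is $\psi$. Concentration-compactness for minimizing sequences of harmonic maps, together with the fact that each degree-one bubble contributes at least $8\pi$ to the asymptotic energy, combined with the upper bound forces $E(u^\infty)\le E(u)$, so $u^\infty$ is a minimizer in $W^{1,2}_{\psi}(\BB^3,\S^2)$. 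By the uniqueness hypothesis, $u^\infty=u$. Standard $\varepsilon$-regularity upgrades the weak convergence to strong $W^{1,2}_{\mathrm{loc}}$ and $C^1_{\mathrm{loc}}$ convergence on $\overline{\BB}^3$ minus a finite exceptional set $\Sigma\supset\mathrm{sing}(u)\cup\{q\}$.

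Second, I would locate each singularity separately. If no singular point of $u_j$ accumulated at $p$, then $\varepsilon$-regularity would give uniform $C^1$ bounds for $u_j$ on a fixed ball around $p$, making $u$ regular at $p$ — contradicting the hypothesis. Thus there exist $p_j\in\mathrm{sing}(u_j)$ with $p_j\to p$. For the singularity near $q$: suppose, toward contradiction, that $u_j$ admits no singular point in a fixed half-ball $B(q,\delta)\cap\BB^3$ along a subsequence. By boundary $\varepsilon$-regularity, $u_j$ would then be uniformly $C^{1,\alpha}$ up to $\S^2\cap B(q,\delta)$. However, $\psi_j$ carries a degree-one bubble with boundary Dirichlet energy $8\pi+\mathrm{o}(1)$ supported in the shrinking cap $|x-q|<1/j$; by the monotonicity formula combined with a topological degree computation on the half-sphere $\partial B(q,1/j)\cap\overline{\BB}^3$ (on whose equatorial circle $u_j$ is close to the constant $\psi(q)$ for $j$ large, so the trace has a well-defined degree), the boundary concentration forces a pointwise blow-up of $|\nabla u_j|$ near $q$, which is incompatible with the uniform $C^{1,\alpha}$ bound. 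Hence $u_j$ has a singular point $q_j\in B(q,1/j)\cap\BB^3$, and $q_j\to q$.

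The main obstacle is the identification $u^\infty=u$ in the first step. The boundary data $\psi_j$ carry approximately $8\pi$ of boundary Dirichlet energy that does not vanish, so $\psi_j\not\to\psi$ in $H^{1/2}(\S^2)$, and the weak $W^{1,2}$-limit of a sequence of minimizers is not automatically a minimizer of the limit boundary problem. The sharp energy accounting — showing that the $8\pi$ deficit is fully absorbed by a single degree-one boundary bubble concentrating at $q$, rather than partially dissipated or split off into additional interior bubbles — is delicate; it relies on pairing the upper bound from the explicit competitor with the universal $8\pi$ lower bound for degree-one bubbles, and on invoking the uniqueness of $u$ to close the loop.
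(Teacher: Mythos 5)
Your overall skeleton (identify the limit of the $u_j$ as $u$ via uniqueness, then force one singularity near $p$ and one near $q$) is the paper's, but two of your key steps do not hold up. First, the identification $u^\infty=u$ rests on an ``$8\pi$ energy quantization'' for the boundary bubble that is a two--dimensional phenomenon and is simply absent here: the Dirichlet integral on $\BB^3$ is not conformally invariant, and the bubble inserted at scale $2/j$ can be extended into $B(q,2/j)\cap\BB^3$ (for instance by an essentially homogeneous, cone-like extension of the trace) at interior energy cost $O(1/j)$, not $8\pi$. Consequently there is no ``$8\pi$ deficit'' for the minimizers to absorb --- the whole point of the singularity-insertion mechanism is that $u_j$ avoids any such cost by forming a point singularity --- and your argument only yields $E(u^\infty)\le E(u)+8\pi$ from lower semicontinuity, which does not give minimality of $u^\infty$ for the boundary data $\psi$. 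To close this step you need either the sharp competitor bound $E(u_j)\le E(u)+o(1)$ (after which lower semicontinuity, the $L^2$ convergence of traces and uniqueness of $u$ give $u^\infty=u$ and then strong $H^1$ convergence), or, as the paper does, the Almgren--Lieb compactness theorem \cite[Theorem~1.2 part (4)]{AL}: minimizers with uniformly bounded energy and boundary energy subconverge \emph{strongly} in $H^1$ to a minimizer of the limiting boundary mapping; combined with $\psi_j\to\psi$ a.e.\ and uniqueness this is the paper's Step~1.

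Second, your argument for $q_j\to q$ fails at the contradiction hypothesis: absence of interior singular points of $u_j$ in $B(q,\delta)\cap\BB^3$ does \emph{not} give uniform $C^{1,\alpha}$ bounds up to $\S^2\cap B(q,\delta)$, because Schoen--Uhlenbeck boundary regularity requires uniform control of the boundary data, while $\|\psi_j\|_{C^1}$ blows up like $j$ on the cap $\dysk(q,2/j)$. Thus the ``pointwise blow-up of $|\nabla u_j|$ near $q$'' you invoke is automatic from the trace alone, is compatible with $u_j$ being smooth inside, and contradicts nothing. The paper's replacement is quantitative: fix $r$ small so that boundary monotonicity gives $\int_{B(q,2r)}|\nabla u|^2<2\pi r$; if $u_j$ were continuous on $\{|x-q|<2r\}$, then $\deg\bigl(u_j|_{\uu_t}\bigr)=0$ for every $t\in(r,2r)$, and since $\psi_j$ covers $\S^2\setminus\{\psi(q)\}$ exactly once on the tiny cap, every value outside a small cap around $\psi(q)$ must have a second preimage on $\uu_t^-$ for each such $t$; hence $\h^1\bigl(u_j^{-1}\{y\}\bigr)\ge r$ for all such $y$, and the co-area formula yields $\int_{\{r<|x-q|<2r\}}|\nabla u_j|^2>7\pi r$, contradicting the strong $H^1$ convergence $u_j\to u$. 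Some argument of this degree-plus-coarea type (or another genuinely quantitative lower bound on the local energy of a regular $u_j$) is needed; the uniform-regularity contradiction you propose cannot be repaired as stated.
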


Since we had some trouble to follow the argument in \cite{AL} --- in particular the lines 11--14 on page 521 --- in full detail, we
include here a more detailed variant of Almgren and Lieb's proof, explaining the parts which were unclear for us.

\begin{proof}
The proof consists of five steps.

\smallskip
\noindent
\textsc{Step 1.} We first show that $u_j\rightarrow u$ strongly in $H^1$. By \cite[Theorem~1.1]{AL}, 
$$E(u_j)<C\sqrt{\partial E(\psi_j)}< C\sqrt{\partial E(\psi) + 8\pi + L},$$ 
so $\sup_j E(u_j)<\infty$ and $\sup_j \partial E(\psi_j)<\infty$. Therefore, by \cite[Theorem~1.2 part (4)]{AL}, after passing to subsequences $u_j$ and $\psi_j$ converge to some $u_0$ and $\psi_0$, respectively; the convergence (of each of these subsequences) is strong in $H^1$ and $u_0$ is a minimizer for its boundary mapping $\psi_0$. By the strong convergence we obtain the existence of another subsequence $j_k$ such that $\psi_{j_k}(x)\rightarrow\psi_0(x)$ for a.e.\ $x\in\S^2$. However, by its very definition $\psi_j(x)\rightarrow \psi(x)$ for all $x\in\S^2\setminus\{q\}$, so that $\psi_0=\psi$ a.e. and by the uniqueness of $u$ we obtain that $u_0=u$.

\noindent
\textsc{Step 2.} Now the existence of interior singular points $p_j$ of $u_j$ for sufficiently large $j$, as well as the convergence $p_j\rightarrow p$, follows from \cite[Theorem~1.8 part (2)]{AL}. (In short, if all $u_j$ were regular in a small neighborhood of $p$, the scaled energy of $u$ over a small ball $B(p,2/j)$ would be small enough to guarantee the regularity of $u$ at $p$.)

\smallskip
\noindent
\textsc{Step 3.} By the Boundary Regularity Theorem \cite{SU2} and monotonicity formula (see e.g. \cite[Corollary 1.7]{ABL}), we may choose an $R>0$ such that for each $r<R/2$ we have $\int_{B(q,2r)}|\nabla u(x)|^2\, dx<2\pi r$. 

\smallskip
\noindent
\textsc{Step 4.} As $\psi:\S^2\rightarrow\S^2$ is continuous near $q$, for any $\ve>0$ we may find a $\delta>0$ such that if $|x-q|<\delta$, $x\in\S^2$, then $|\psi(x)-\psi(q)|<\ve$. Let us fix $\ve>0$ and assume that for a~fixed small $r=\min(\delta,\frac{1}{2}R)$ independent of $j$   there is no singularity for each $u_j$ in the region $|x-q|<2r$.

Combining the elementary inequality $2|J(x)|\le|\nabla u(x)|^2$ and the co-area formula  
\[
\int_{\BB^3}|J(u)(x)|\, dx=\int_{w\in\S^2}\h^1(u^{-1}\{w\})\, d\h^2(w)\, ,
\]
see \cite[Chapter 3]{F}, one obtains
\begin{equation}\label{coarea}
 \int_{\BB^3}|\nabla u(x)|^2 \,dx\ge2\int_{w\in\S^2}\h^1(u^{-1}\{w\})\, d\h^2(w).
\end{equation}

For a fixed point $q\in\S^2$, to shorten the notation, we write $\dysk(q,a)=B(q,a)\cap\S^2$ for the spherical cap formed by the intersection of the ball $B(q,a)$ and the unit sphere. $\ann(q;a,b)=\bigl(B(q,b)\setminus B(q,a)\bigr)\cap\S^2$ is the  intersection of the annulus  $B(q,b)\setminus B(q,a)$ with the unit sphere. We also write $\uu_t=\partial\left(B(q,t)\cap\BB^3\right)$ for the boundary of the intersection of the unit ball and the ball centered at $q$ of radius $t$, and $\uu_t^-=\partial B(q,t)\cap \BB^3$ for the boundary of the ball centered at $q$ of radius $t$ intersected with the unit ball. Finally,  $\vv_\ve=B(\psi(q),\ve)\cap\S^2$ stands for the spherical cap established by the intersection of a ball centered at $\psi(q)$ of radius $\ve$ and a unit sphere.

We will use \eqref{coarea} to estimate the energy of $u_j$ for sufficiently large $j$'s in the region $r<|x-q|<2r$.  We consider $j>2/r$, so that the strict inclusion  $\dysk(q,2/j)\varsubsetneq\dysk_r:=\dysk(q,r)$ holds. By assumption $(d)$, $\psi_j\bigl(\dysk(q,1/j)\bigr)=\S^2\setminus\{\psi(q)\}$ and $\psi_j$ is injective in this small spherical cap, i.\,e., for any $y\in\S^2\setminus\{\psi(q)\}$ the set $\psi|_{\dysk(q,1/j)}^{-1}(y)$ consists of only one point. By $(a)$ and $(c)$, we also have $\psi_j\bigl(\ann(q;1/j,2/j)\bigr)\ \varsubsetneq \vv_\ve$  and $\psi_j\bigl(\ann(q;2/j,2r\bigr)\subseteq \vv_\ve$.
                   
\newcommand{\omitted}[1]{}
\omitted{
\begin{figure}[!ht]
\minipage{0.48\textwidth}
  \includegraphics[width=\linewidth]{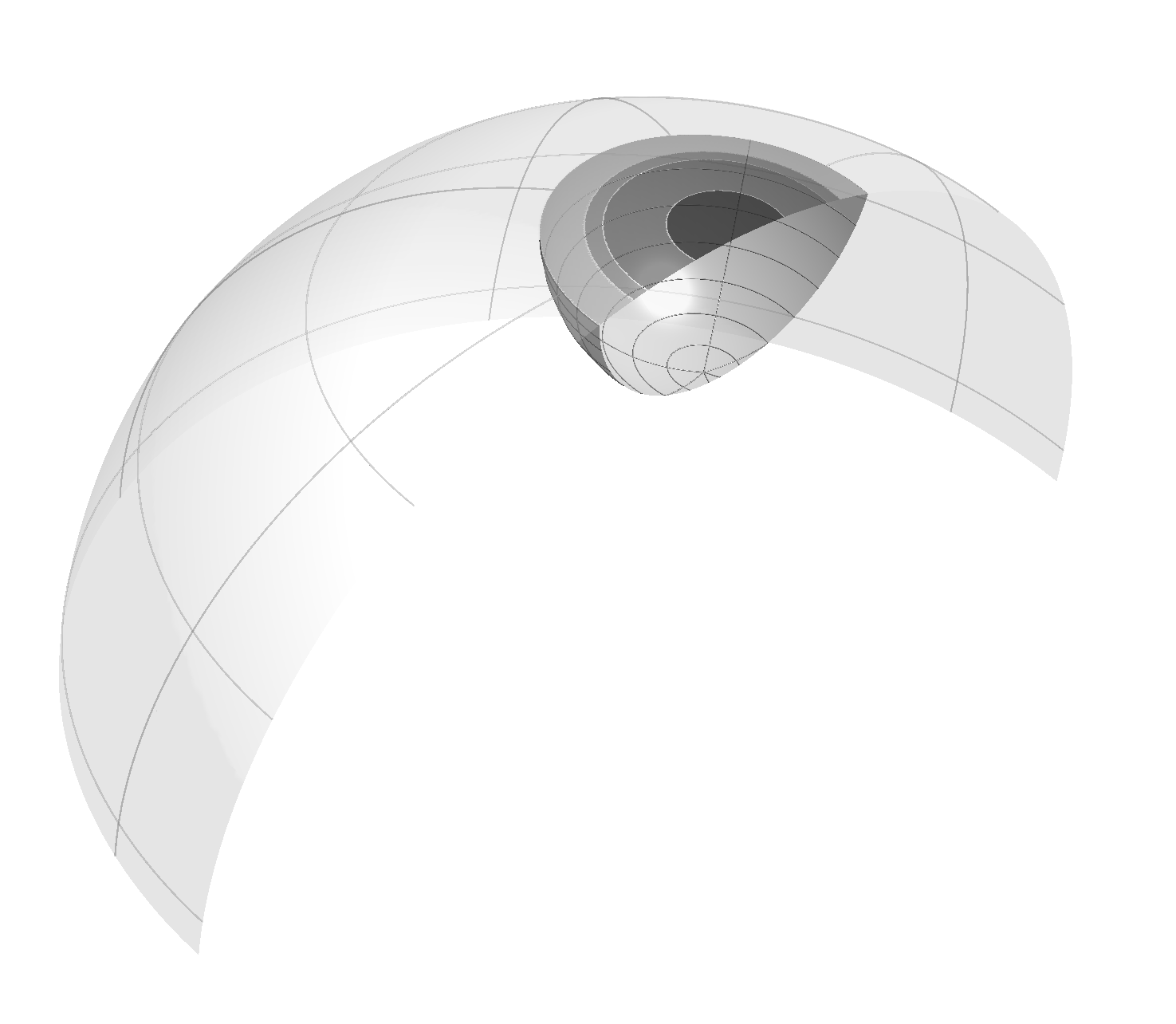}
  \caption{The domain of $u_j$. The green, red, blue and orange area are: $\dysk(q,1/j)$, $\ann(q;1/j,2/j)$, $\ann(q;2/j,r)$ and $\uu_t\setminus\dysk_r$ respectively, while the gray area illustrates $\BB^3\cap B(q,2r)$.} \label{fig:sferkidziedzina}
\endminipage\hfill
\minipage{0.48\textwidth}
  \includegraphics[width=\linewidth]{kolorowesferki.png}
  \caption{The image of $u_j(\uu_t)$. Colors illustrate the images of corresponding areas in Figure \ref{fig:sferkidziedzina}.
  }\label{fig:sferkiobraz}
\endminipage\hfill
\end{figure}
}  

\begin{figure}[!ht]   
{}\hfill{}
\includegraphics[height=7cm]{SferkiPociete-nowe.png}
{}\hfill{}  	
\includegraphics[height=6.5cm]{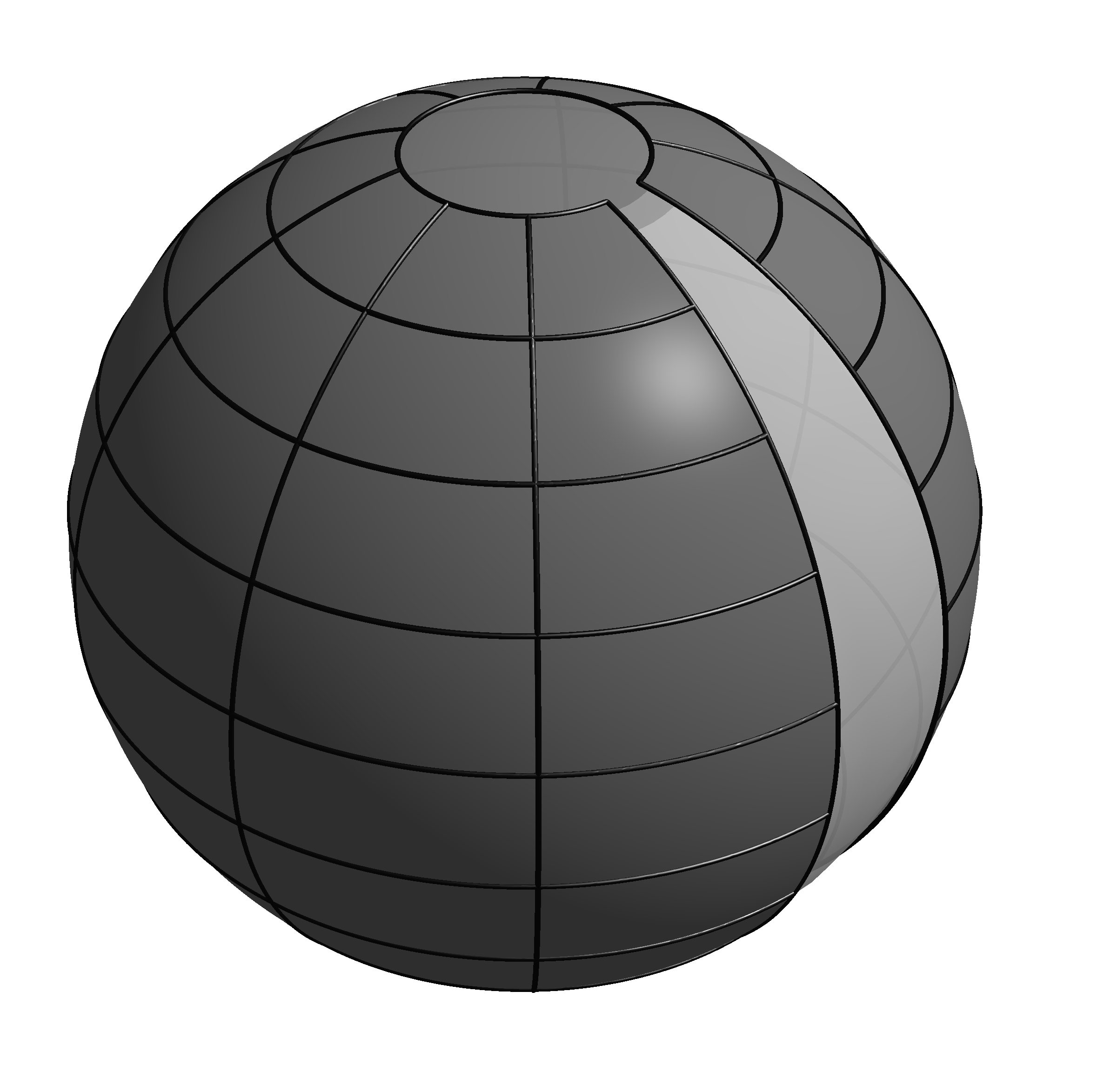}  
{}\hfill{}
\caption{Left: the domain of $u_j$. On the top part of $\S^2$, four shaded areas are visible: the dark gray cap $\dysk(q,1/j)$, a lighter annulus $\ann(q;1/j,2/j)$, still lighter narrow annulus $\ann(q;2/j,r)$, and the lightest $\uu_t\setminus\dysk_r$, with the rest of the boundary of $\BB^3\cap B(q,t)$ `hanging below'. Right: the image of $u_j(\uu_t)$, with corresponding shades of gray. The innermost dark cap $\dysk(q,1/j)$ is mapped to almost the whole sphere, like a blown-up piece of the bubble gum.}
\end{figure}
 
Since, by the assumption above, $u_j$  is continuous in the region $\{|x-q|<2r\}$, we have $\deg\big(u_j|_{\uu_t}\big)=0$ for every $t<2r$ because the set $\uu_t$ is topologically a sphere. Now, choose a number $t\in(r,2r)$, fix a point $y\in\S^2\setminus\{\psi\bigl(\ann(q;1/j,r)\bigr)\}$ and consider the set $(u_j|_{\uu_t})^{-1}(y)$ of all its preimages. We know that there exists precisely one point $a\in\dysk(q,1/j)$ such that $\psi_j(a)=u_j(a)=y$; since the degree is $0$ we deduce that there must be another point $b\in\uu_t^-$ such that $u_j(b)=y$ (with a reverse orientation than at $a$). This degree consideration shows that for each $t\in(r,2r)$ there exists a point $x_t\in \uu_t^-$ such that $u_j(x_t)=y$. Since $\S^2\setminus\{\psi\bigl(\ann(q;1/j,r)\bigr)\}\supset \S^2\setminus\vv_\ve$, we have $\h^1(u_j^{-1}\{w\})\ge r$ for all $w\in \S^2\setminus\vv_\ve$.

A simple computation yields $\h^2(\S^2\setminus\vv_\ve)=\pi\big(3+\brac{1-\frac{\ve}{2}}^2\big)$. 
Thus, for $\eps$ small,  by formula (\ref{coarea}) we obtain
\begin{align*}
 \int_{\{r<|x-q|<2r\}}|\nabla u_j(x)|^2 dx&\ge 2\int_{\S^2}\h^1(u_j^{-1}\{w\})d\h^2(w)\\
 &\ge 2\cdot r\cdot \pi\left(3+\left(1-\frac{\ve}{2}\right)^2\right)\\
 &> 7\pi r.
\end{align*}

Having in  mind the inequality $\int_{B(q,2r)}|\nabla u|^2dx<2\pi r$ from Step 3, this is a contradiction to the strong convergence obtained in Step 1. Thus in the region $|x-q|<2r$ for sufficiently large $j$'s each $u_j$ has a singularity $q_j$.

\smallskip
\noindent
\textsc{Step 5.} Now it suffices to show that $q_j\rightarrow q$ as $j\rightarrow \infty$. Since $\ve>0$ was arbitrary, we may choose a sequence of $\ve_j\searrow0$ such that the corresponding $r_j\searrow0$ and the regions $B(q,2r_j)$ in which the singularity $q_j$ appears will shrink to $\{q\}$.  
\end{proof}

\begin{rem}
 The assertion of Theorem \ref{sing} holds true if we replace each $\psi_j$ by a smooth approximation $\w{\psi}_j$ such that the modification in the region $|x-q|<\frac1j$ from Definition \ref{singinstal}~(d) remains a diffeomorphism of the smaller disc to the whole sphere without a small cap centered at $\psi(q)$, such that for sufficiently large $j$'s this cap is contained in $\vv_\ve$ from Step 4. One may easily check that it does not affect the proof.
\end{rem}

\section{Construction of $\w{\vp}$}

The main idea is as follows: we will modify $\vp$ on two antipodal sets (in fact, on two little antipodal spherical caps in $\S^2$) of small measures. The modified $\w{\vp}$ will be arbitrarily close to $\vp$ in the space $W^{1,p}$, $1\le p<2$ although its oscillations on these discs will be large in $C^0$. In the first step of the construction, we shall perturb the original mapping slightly, to make it constant on those two discs. Next, roughly speaking, we repeat the construction of Hardt and Lin in \cite{HL} in those regions to obtain our $\w{\vp}$. %\textcolor{red}{Moze napisac troche wyrazniej, ze sa tak naprawde trzy etapy konstrukcji, i napisac, po co jest kazdy z nich}

At the beginning of this section we recall without proofs a few known results which will be used in the proof of Theorem \ref{main:thm}. In the second part we construct our boundary condition and we close the section with the proof of Theorem \ref{main:thm}.

\subsection{Auxiliary propositions}

The following theorem is a restatement of boundary regularity criterion of Schoen and Uhlenbeck \cite{SU2}. This form, convenient for our purposes, is taken from \cite[Theorem 1.10 (2)]{AL}. 

\begin{thm}\label{UBC}
 There exists $\ve>0$ with the following properties. Suppose $f:\R^2\rightarrow\R$ is three times continuously differentiable with $f(0)=0$, $|\nabla f(0)|=0$, and each partial derivative of $f$ up to order 3 does not exceed $\ve^2$ in absolute value. Suppose also that $\vp_0:\R^2\rightarrow\S^2$ is three times continuously differentiable and that each partial derivative of $\vp_0$ up to order 3 does not exceed $\ve^2$ in absolute value. Finally suppose that $u^*$ is a minimizer in the region 
 \[
  \left\{(x,y,z):x^2+y^2\le 1 \textrm{ and } f(x,y)\le z\le 1\right\}
 \]
and the boundary mapping $\vp^*$ for $u^*$ satisfies the condition that
\[
 \vp^*(x,y,f(x,y))=\vp_0(x,y)  \textrm{ whenever } x^2+y^2<1.
\]
Then there is a two times continuously differentiable mapping $u_0:\R^3\rightarrow\S^2$ such that each partial derivative of $u_0$ up to order 2 does not exceed $\ve$ in absolute value and $u^*$ coincides with $u_0$ in the region 
\[
 \left\{(x,y,z):x^2+y^2\le\frac{1}{2} \textrm{ and } f(x,y)\le z\le\ve\right\}.
\]
\end{thm}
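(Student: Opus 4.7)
The plan is to follow the boundary $\ve$-regularity scheme of Schoen--Uhlenbeck, rendered quantitative enough to track the explicit dependence of the $C^2$-bound on the smallness parameter. First, I would flatten the boundary. The map $\Phi(x,y,z)=(x,y,z-f(x,y))$ is a $C^3$-diffeomorphism onto its image, differing from the identity by $O(\ve^2)$ together with all derivatives up to order $3$. Setting $\w u:=u^*\circ\Phi^{-1}$ produces a minimizer of the Dirichlet-type energy associated to the pull-back metric $g_{ij}=\delta_{ij}+O(\ve^4)$, on a cylinder with a flat bottom. The transported boundary datum, still denoted $\vp_0$, has all partial derivatives up to order three bounded by $\ve^2$ and vanishes to first order at the origin.

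Second, I would establish smallness of the rescaled boundary energy. Since $\vp_0$ is $C^3$-close to the constant $\vp_0(0)$, the trivial extension $\tau(x,y,z):=\vp_0(x,y)/|\vp_0(x,y)|$ serves as a competitor in the perturbed variational problem on a small half-ball $B_r^+$; one has $\int_{B_r^+}|\nabla\tau|^2=O(\ve^4 r^3)$, so by minimality $r^{-1}\int_{B_r^+}|\nabla\w u|^2=O(\ve^4 r^2)$. Combined with the boundary monotonicity formula (cf.\ \cite[Corollary~1.7]{ABL}), this forces the normalized energy density of $\w u$ to remain below the universal Schoen--Uhlenbeck threshold on every admissible scale, and the boundary $\ve$-regularity theorem from \cite{SU2} then yields H\"older continuity of $\w u$ up to the flat bottom in, say, $B_{1/2}^+$.

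Third, I would bootstrap via Schauder estimates. Once $\w u$ is continuous and close to $\vp_0(0)$, the semilinear system $-\Delta_g \w u=|\nabla_g \w u|^2\w u$ with small $C^3$ boundary data and $C^3$-perturbed coefficients is a genuinely elliptic perturbation problem, and standard linear Schauder theory in each chart upgrades $\w u$ to $C^{2,\alpha}$ up to the boundary. To obtain the sharp quantitative bound that each partial derivative up to order $2$ is at most $\ve$, I would run a compactness-and-blow-up argument: if the conclusion failed along a sequence $\ve_k\to 0$, rescaling would yield a nontrivial minimizing harmonic map on a half-space with constant boundary data, contradicting the boundary Liouville theorem for such minimizers. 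Finally, pulling back through $\Phi^{-1}$ and extending from a neighborhood of the origin to all of $\R^3$ by a cut-off composed with stereographic projection onto $\S^2$ produces the desired $u_0\colon\R^3\to\S^2$ agreeing with $u^*$ on $\{x^2+y^2\le\tfrac12,\ f(x,y)\le z\le\ve\}$.

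The main obstacle is the quantitative accounting of constants, so that the same $\ve$ which bounds the data controls the derivatives of the conclusion. The qualitative $\ve$-regularity and Schauder bootstrap are classical; what is delicate is the simultaneous presence of two independent sources of smallness, namely the metric perturbation coming from $f$ and the non-constant but $C^3$-small boundary datum $\vp_0$. Properly normalizing the blow-up to separate these two effects, and arranging the final cut-off extension to $\R^3$ without generating large gradients at the interface, is where the argument is most easily derailed.
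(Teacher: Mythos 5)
Note first that the paper does not prove this statement at all: it appears in the subsection of auxiliary results that are explicitly ``recalled without proofs,'' being a verbatim restatement of Almgren--Lieb \cite[Theorem~1.10(2)]{AL}, i.e.\ the uniform version of Schoen--Uhlenbeck boundary regularity \cite{SU2}. So there is no in-paper argument to compare yours with; what can be judged is whether your sketch would actually establish the quoted result, and there it has a genuine gap at the decisive step. In your second step you take the ``trivial extension'' $\tau(x,y,z)=\vp_0(x,y)$ as a competitor on a small half-ball $B_r^+$ and conclude from minimality that $r^{-1}\int_{B_r^+}|\nabla\w u|^2=O(\ve^4 r^2)$. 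This comparison is not admissible: minimality of $\w u$ on $B_r^+$ only permits competitors whose trace agrees with $\w u$ on \emph{all} of $\partial B_r^+$, and $\tau$ agrees with $\w u$ only on the flat bottom, not on the spherical part of $\partial B_r^+$, where nothing is known about $\w u$. Indeed the bound you claim cannot be true uniformly in $r$ by such soft means: the hypotheses say nothing about $\vp^*$ on the lateral and top boundary, so the energy of $u^*$ (and its normalized energy at moderate scales near the bottom) can be arbitrarily large; the whole content of the theorem is precisely that the smallness of the datum on the flat portion alone forces uniform smallness of the scaled energy, and hence uniform $C^2$ bounds, in a full neighborhood of that portion. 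Also note that boundary monotonicity propagates smallness from small scales to large ones, not the other way, so it cannot substitute for the missing estimate.

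What is actually needed here --- and is the hard ingredient of \cite[Theorem~1.10]{AL} --- is a \emph{local} energy estimate near a boundary portion that is independent of the far-away boundary data: a comparison construction that matches $\w u$ on the spherical part of $\partial B_r^+$ (e.g.\ radial/projection-type competitors from a well-chosen center, as in Hardt--Kinderlehrer--Lin and Almgren--Lieb), giving $r^{-1}\int_{B_r^+}|\nabla \w u|^2\le C$ with $C$ depending only on the local boundary datum. With that in hand, uniform smallness below the Schoen--Uhlenbeck threshold is then obtained by combining monotonicity with a compactness/strong-convergence argument for minimizers (Almgren--Lieb Theorem~1.2 or a Luckhaus-type lemma), in the spirit of the blow-up you invoke only at the very end for the quantitative $C^2$ bound. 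Your flattening step, the Schauder bootstrap, and the final extension of a $C^2$-small, nearly constant $\S^2$-valued map from the slab to all of $\R^3$ are fine (the extension is routine in a stereographic chart), but since Steps 3--4 all rest on the invalid Step 2, the proof as written does not close; the honest alternative, which is what the paper does, is simply to cite \cite[Theorem~1.10(2)]{AL}.
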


The next theorem was discovered by Almgren and Lieb; a precise statement can be found in \cite[Theorem~4.1 (1)]{AL}. It asserts that the boundary mappings having unique minimizers are dense in $H^1(\partial\BB^3)$. Theorem \ref{uniqueness}, and the trick used in its proof, will play an important role in our construction.

\begin{thm}\label{uniqueness}
 Suppose that $q$ is a point in $\partial\BB^3$, $\ve>0$, and that $\vp:\partial\BB^3\rightarrow\S^2$ is a boundary mapping with $\partial E(\vp)<\infty$. Then there is another mapping $\vp^*:\partial\BB^3\rightarrow\S^2$ which coincides with $\vp$ except possibly on that part of $\partial\BB^3$ within the ball $B(q,\ve)$, which differs from $\vp$ in $H^1(\partial\BB^3)$ norm by no more than $\ve$, and for which there is exactly one minimizer $u^*:\BB^3\rightarrow\S^2$ having boundary mapping $\vp^*$.
\end{thm}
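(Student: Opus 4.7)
The plan is to find $\vp^*$ inside a carefully constructed one-parameter family of perturbations of $\vp$, exploiting a generic-uniqueness argument: first show the minimum Dirichlet energy depends on the parameter in a sufficiently regular (in fact Lipschitz / concave) way, then conclude uniqueness for almost every parameter value, and finally pick one that is close to $0$.

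First, I would fix a smooth cutoff $\eta:\partial\BB^3\to[0,1]$ supported in $B(q,\ve)\cap\partial\BB^3$ and equal to $1$ on a smaller cap, and a one-parameter family of rotations $R_\theta:\S^2\to\S^2$ around some fixed axis $v\in\R^3$. For $t\in[0,1]$ define
\[
\vp_t(x) = R_{t\,\eta(x)}\bigl(\vp(x)\bigr),\qquad x\in\partial\BB^3.
\]
Then $\vp_0=\vp$, each $\vp_t$ coincides with $\vp$ outside $B(q,\ve)$, and $\|\vp_t-\vp\|_{H^1(\partial\BB^3)}\le C(\vp,\eta)\,t$. Set
\[
m(t) := \inf\bigl\{E(u)\colon u\in W^{1,2}_{\vp_t}(\BB^3,\S^2)\bigr\}.
\]

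The second step is to show that $m$ is Lipschitz on $[0,1]$. Given a minimizer $u_1$ for $\vp_{t_1}$, I would construct a competitor $\tilde u$ for $\vp_{t_2}$ by applying a $t$-dependent rotation in a thin boundary layer of width $\delta$: pick a radial cutoff $\chi$ with $\chi\equiv 1$ on $\partial\BB^3$ and supported in $\{\mathrm{dist}(\,\cdot\,,\partial\BB^3)<\delta\}$, and extend $u_1$ radially, then set $\tilde u(x) = R_{(t_2-t_1)\eta(\pi x)\chi(x)}\bigl(u_1(x)\bigr)$ where $\pi$ is radial projection onto $\partial\BB^3$. A direct estimate of the extra energy generated in the layer, optimizing in $\delta$, yields $|m(t_1)-m(t_2)|\le K|t_1-t_2|$. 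Moreover, because the interpolation can be arranged symmetrically, the same construction applied to $\vp_{(t_1+t_2)/2}$ shows that $m$ is in fact concave (up to an error that can be absorbed), so $m$ is differentiable outside a countable set.

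The third and crucial step is to upgrade differentiability to uniqueness at generic $t$. Following the idea in \cite[Theorem 4.1(1)]{AL}, suppose $u_1$ and $u_2$ are two distinct minimizers for $\vp_{t_0}$. Using each $u_i$ as a seed in the boundary-layer construction above gives two one-sided comparison inequalities
\[
m(t_0+h)\le E(u_i)+h\,\Psi(u_i)+o(h),\qquad m(t_0-h)\le E(u_i)-h\,\Psi(u_i)+o(h),
\]
where $\Psi(u_i)$ is an explicit linear functional of the normal derivative $\partial_\nu u_i$ on $B(q,\ve)\cap\partial\BB^3$ (essentially $\int \partial_\nu u_i \cdot (v\times \vp_{t_0})\eta\,d\sigma$). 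Differentiability of $m$ at $t_0$ forces $\Psi(u_1)=\Psi(u_2)=m'(t_0)$, which combined with boundary regularity of minimizers (Schoen--Uhlenbeck \cite{SU2}) and unique continuation for the real-analytic harmonic-map system pins down $u_1\equiv u_2$ in a full boundary neighborhood of $B(q,\ve)$ and hence globally.

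Finally, the set of $t\in[0,1]$ at which $m$ is differentiable has full measure, so I can choose such a $t$ with $0<Ct<\ve$, and set $\vp^*:=\vp_t$. All the conclusions of the theorem then follow from the construction. The main obstacle I expect is the rigorous execution of the third step: converting differentiability of $m$ at $t_0$ into the identity of minimizers. The boundary-layer construction must be sharp enough (linear in $h$ with the right first-order coefficient) to identify $\Psi$, and then one must combine this with enough regularity of minimizers to conclude that agreement of $\Psi(u_1)$ and $\Psi(u_2)$ plus agreement of their boundary traces forces $u_1=u_2$.
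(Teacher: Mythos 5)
The decisive gap is in your third step. Differentiability of $m$ at $t_0$, combined with your two one-sided comparison inequalities, only forces every minimizer $u$ for $\vp_{t_0}$ to share the value of the \emph{single real number} $\Psi(u)=\int_{\partial\BB^3}\partial_\nu u\cdot(v\times\vp_{t_0})\,\eta\,d\sigma$. One scalar constraint does not determine $\partial_\nu u$ on any open portion of $\partial\BB^3$, so unique continuation for the (real-analytic) harmonic map system cannot be invoked: for that you would need the full Cauchy data $(u_i,\partial_\nu u_i)$ of the two minimizers to coincide on an open boundary patch, whereas you only know that their traces coincide and that one linear functional takes the same value. Two distinct minimizers with the same boundary data (for instance with different singular configurations) can perfectly well share the value of $\Psi$. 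To make a value-function argument of this kind yield uniqueness you would need an infinite-dimensional family of boundary perturbations, with differentiability in enough directions simultaneously to recover the whole normal derivative on a cap, and a genericity or selection argument in that setting is far more delicate than ``a.e.\ $t\in[0,1]$''. There is a secondary problem as well: identifying the first-order coefficient of $m$ as a boundary flux requires regularity of the minimizers up to the support of $\eta$, which is not available when $\vp$ is only assumed to satisfy $\partial E(\vp)<\infty$ (no continuity, let alone smoothness, near $q$ is assumed); and the claimed concavity of $m$ is unjustified, although this last point is harmless since Lipschitz continuity already gives a.e.\ differentiability.

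The mechanism actually used (following Almgren and Lieb, and reproduced in the proof of Lemma~\ref{lemourphi}) is different and avoids both difficulties: one enlarges $\BB^3$ by a tiny smooth bump contained in $B(q,\ve)$, extends $\vp$ to the boundary of the enlarged domain $\Omega$ by composing with the nearest-point projection, takes \emph{any} minimizer $u^*$ in $\Omega$, and defines $\vp^*:=u^*\!\mid_{\partial\BB^3}$. Uniqueness of the minimizer for $\vp^*$ is then automatic from Lemma~\ref{unilem}: a competing minimizer in $\BB^3$ with the same trace could be glued to $u^*$ on $\Omega\setminus\BB^3$, producing a second minimizer in $\Omega$ that agrees with $u^*$ on the \emph{open} bump region, and interior real analyticity then forces agreement everywhere. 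Closeness of $\vp^*$ to $\vp$ in $H^1$ and the localization of the change to $B(q,\ve)$ come from the uniform boundary regularity theorem (Theorem~\ref{UBC}) applied in the bump region. In short, uniqueness is extracted from coincidence of two minimizers on an open set of the enlarged domain, not from coincidence of a scalar first variation; that is the ingredient your proposal is missing.
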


The key observation in the proof of the above theorem is the following lemma which follows easily from the fact that harmonic maps into $\S^2$ are real analytic away from their singular points (see  the proof of  Theorem~4.1 in \cite{AL}). 
%\textcolor{red}{Czy piszemy, ze na brzegu $\Omega$ nie ma byc punktow osobliwych? Czy [AL] przypadkiem tego nie zakladaja?}

\begin{lem}\label{unilem}
 Suppose $\Omega$ is a proper subdomain of a larger domain $\Omega^*$ and $u$ is any minimizer (not necessary unique) in $\Omega^*$. Then the restriction $u|_{\Omega}$ of $u$ to $\Omega$ is the unique minimizer for its boundary mapping.
\end{lem}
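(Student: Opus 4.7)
My plan is: glue an alternative minimizer on $\Omega$ to $u$ on $\Omega^* \setminus \Omega$, use the minimality of $u$ to force the glued map to be another minimizer on $\Omega^*$, and then invoke real analyticity off the singular set to conclude the two must coincide.

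Concretely, let $v \in W^{1,2}(\Omega, \S^2)$ be any minimizer whose trace on $\partial\Omega$ equals that of $u|_\Omega$. Define $\tilde u \colon \Omega^* \to \S^2$ by $\tilde u = v$ on $\Omega$ and $\tilde u = u$ on $\Omega^* \setminus \Omega$. Since the traces of $v$ and $u|_\Omega$ agree on $\partial\Omega$, the glued map $\tilde u$ lies in $W^{1,2}(\Omega^*, \S^2)$ with the same trace on $\partial\Omega^*$ as $u$, and the Dirichlet integral splits additively:
\[
\int_{\Omega^*}|\nabla \tilde u|^2\,dx \;=\; \int_\Omega|\nabla v|^2\,dx \;+\; \int_{\Omega^* \setminus \Omega}|\nabla u|^2\,dx.
\]
Minimality of $v$ bounds the right-hand side by $\int_{\Omega^*}|\nabla u|^2\,dx$, and minimality of $u$ on $\Omega^*$ forces the reverse inequality; equality must therefore hold, so $\tilde u$ is also a minimizer on $\Omega^*$.

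Both $u$ and $\tilde u$ are thus minimizing harmonic maps from $\Omega^*$ into $\S^2$. By Schoen--Uhlenbeck each has only a discrete set of interior singular points; let $S$ be their union. Off $S$, both maps are smooth solutions of the harmonic-map system \eqref{H} and, by the standard analyticity theorem for solutions of elliptic systems with real analytic nonlinearity (Morrey), are real analytic on $\Omega^* \setminus S$. By construction $u = \tilde u$ identically on the open set $\Omega^* \setminus \overline\Omega$, which is nonempty because $\Omega$ is a proper subdomain. In $\R^3$ removing a discrete subset of a connected open set leaves it connected, so $\Omega^* \setminus S$ is connected; the identity principle for real analytic maps then yields $u \equiv \tilde u$ throughout $\Omega^* \setminus S$, and hence a.e.\ on $\Omega^*$. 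In particular $v = u|_\Omega$ a.e.\ on $\Omega$, establishing uniqueness.

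I expect the only delicate point to be the invocation of the identity principle for $\S^2$-valued harmonic maps, which reduces to applying Morrey's analyticity result componentwise together with the connectedness of $\Omega^* \setminus S$; the cut-and-paste of $v$ with $u$, the additive splitting of the energy, and the discreteness of the singular set via Schoen--Uhlenbeck are all routine.
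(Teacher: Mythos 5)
Your argument is correct and is precisely the argument the paper points to (Almgren--Lieb's proof of their Theorem~4.1): comparison by gluing a competitor $v$ to $u$ outside $\Omega$ shows the glued map is again minimizing, and real analyticity of minimizers away from their discrete singular sets plus unique continuation from the open set $\Omega^*\setminus\overline\Omega$ forces $v=u|_\Omega$. The only points worth keeping in mind are the implicit assumptions, satisfied in the paper's application, that $\partial\Omega$ is regular enough for the trace/gluing step and that $\Omega^*\setminus\overline\Omega$ has nonempty interior.
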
   

\subsection{Construction of $\w{\vp}$}
We start with the observation that if $\deg\vp=0$ then there exist two antipodal points $q,\, -q\in\S^2$ such that $\vp(q)=\vp(-q)$. For the existence of such $\pm q\in \S^2$, see for instance Granas and Dugundji \cite[Part~II, p.~94,~Thm.~(6.1)]{GD}. For the convenience of the reader, we give here the gist of a quick argument: assume on the contrary that $\vp(q)\not=\vp(-q)$ for all $q\in \S^2$; one then easily constructs a homotopy from $\vp$ to another map $\vp_0$ which preserves the antipodes, i.\,e., $\vp_0(q)=-\vp_0(-q)$ for each $q\in \S^2$. This is done as follows: for a given $q\in \S^2$, if we already have $\vp(q)=-\vp(-q)$ for some $q\in\S^2$, then the homotopy changes nothing; if $\vp(q)\not =-\vp(-q)$, then the two \emph{distinct} points $\vp(\pm q)\in \S^2$ determine a unique arc $\gamma$ of the great circle such that the length of $\gamma$ is smaller than $\pi$, and we let $\vp(\pm q)$ travel at equal, constant speeds towards two antipodal points $\pm \widetilde{q}$ on that great circle (note that $\gamma$ is located 
symmetrically on one of the half-circles joining $\pm \widetilde{q}$). However, it is well known that each map which preserves the antipodes must be of odd degree, a contradiction.

In the remaining part of this section, we simply say that $\pm q\in\S^2$ are the antipodal points of $\varphi$. First, we perturb $\varphi$ slightly by making it constant close to $\pm q$.

\begin{defi}\label{vp1}
For each $\vp\in C^\infty(\S^2,\S^2)$ with $\, \deg(\vp)=0$, having two antipodal points $\pm q\in \S^2$, and for a fixed number $\delta>0$ such that $\mathcal{H}^2\left(\vp(B( q,2\delta))\cup \vp(B(-q,2\delta))\right)<4\pi$, we let $\vp_1:\S^2\rightarrow\S^2$ denote any intermediate smooth mapping such that
\begin{itemize}
 \item [(1)] $\vp_1(x)\equiv\vp(q)$ for $x\in \S^2\cap\big( B(q,\delta)\cup B(-q,\delta)\big)$;
 \item [(2)] $\vp_1(x)=\vp(x)$ on $\S^2\setminus\bigl( B(q,2\delta)\cup B(-q,2\delta)\bigr)$;
 \item [(3)] On each of the two annuli $B(\pm q,2\delta)\backslash B(\pm q,\delta)$ the map $\varphi_1$ is given by a composition of $\varphi$ with a smooth diffeomorphism from the annulus to a punctured disc.
%to be specific, we take e.g. {$\vp_1(x)=\vp\left(\frac{2(|x\mp p|-\delta)\frac{x\mp p}{|x\mp p|}\pm p}{\abs{2(|x\mp p|-\delta)\frac{x\mp p}{|x\mp p|}\pm p}}\right)$} for $x\in B(\pm p,2\delta)\backslash B(\pm p,\delta)$.
\end{itemize}
\end{defi}

The parameter $\delta$ will be important in our further estimates. Therefore, we explain the choice of $\delta$ in the following lemma.

\begin{lem}\label{vp1estimation} For each $\eps>0$ there is a $\delta>0$ such that the map $\varphi_1$ specified in Definition~\ref{vp1} above has $\deg(\vp_1)=0$ and $\norm{\vp-\vp_1}_{H^{1}(\partial\BB^3)}<\frac{\ve}{4}$.
\end{lem}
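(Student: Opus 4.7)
My plan is to prove the two assertions of the lemma in order, with the $H^1$ bound reducing to an elementary estimate on two small spherical caps.

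For the degree, I will exhibit a homotopy from $\vp$ to $\vp_1$ on $\S^2$. Set $D_{\pm q} := B(\pm q, 2\delta) \cap \S^2$. The hypothesis $\mathcal{H}^2(\vp(D_q) \cup \vp(D_{-q})) < 4\pi$ supplies a point $z_0 \in \S^2 \setminus (\vp(D_q) \cup \vp(D_{-q}))$. By conditions (1)--(3) of Definition~\ref{vp1}, $\vp_1(D_{\pm q}) \subseteq \vp(D_{\pm q}) \cup \{\vp(q)\}$, so $\vp_1(D_{\pm q})$ also avoids $z_0$. Since $\vp$ and $\vp_1$ coincide on $\partial D_{\pm q}$ and $\S^2 \setminus \{z_0\}$ is contractible, the restrictions $\vp|_{D_{\pm q}}$ and $\vp_1|_{D_{\pm q}}$ are homotopic rel boundary inside $\S^2 \setminus \{z_0\}$. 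Glueing these two local homotopies with the trivial homotopy on $\S^2 \setminus (D_q \cup D_{-q})$ yields a global homotopy $\vp \simeq \vp_1$ on $\S^2$, and homotopy invariance gives $\deg \vp_1 = \deg \vp = 0$.

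For the $H^1$ estimate I control the $L^2$ norms of $\vp - \vp_1$ and $\nabla_T(\vp - \vp_1)$ separately; both are supported in $D_q \cup D_{-q}$, a set of surface area $O(\delta^2)$. Since the maps take values in $\S^2$, the $L^2$ part of the difference contributes at most $4\mathcal{H}^2(D_q \cup D_{-q}) \le C\delta^2$. For the gradient, write $L := \sup_{\S^2} |\nabla_T \vp|$, finite because $\vp \in C^\infty$. On the inner caps $B(\pm q, \delta) \cap \S^2$ the map $\vp_1$ is constant, so $\nabla_T \vp_1 = 0$. On each annulus $\bigl(B(\pm q, 2\delta) \setminus B(\pm q, \delta)\bigr) \cap \S^2$ we have $\vp_1 = \vp \circ F$ with $F$ a smooth diffeomorphism to a punctured disc. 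Choosing $F$ as a radial rescaling $F(\rho,\theta) = (g(\rho), \theta)$ in geodesic polar coordinates around $\pm q$ — with $g$ smooth, $g(\delta)=0$, $g(2\delta)=2\delta$, and both $g'$ and $g(\rho)/\rho$ bounded on $[\delta, 2\delta]$ — a direct computation in the spherical metric $d\rho^2 + \sin^2\rho\, d\theta^2$ yields $\|dF\|_{\mathrm{op}} \le C_0$ uniformly for $\delta$ small, hence $|\nabla_T \vp_1| \le C_0 L$ on the annulus. Combining both contributions,
\[
\|\vp - \vp_1\|_{H^1(\partial \BB^3)} \le C(\vp)\delta,
\]
and choosing $\delta < \eps / (4C(\vp))$ finishes the proof.

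The only mild obstacle is to ensure $\vp_1 \in C^\infty$ at the interface $\partial B(\pm q, \delta) \cap \S^2$: since $\vp \circ F$ must match the constant value $\vp(q)$ with all derivatives vanishing across this circle. This is achieved by taking $g$ to be flat (all derivatives zero) at $\rho = \delta$, which is compatible with the requirements listed above and does not affect the operator-norm estimate.
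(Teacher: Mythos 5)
Your proof is correct, and it splits the same way the paper's does (degree statement plus a two-cap $H^1$ estimate), but the degree part takes a genuinely different route. The paper invokes Sard's theorem: using $\mathcal{H}^2\bigl(\varphi(B(q,2\delta))\cup\varphi(B(-q,2\delta))\bigr)<4\pi$ it picks a \emph{regular value} $y$ of $\varphi_1$ outside the image of the two caps, observes that the $\varphi_1$-preimages of $y$ coincide with the $\varphi$-preimages, and reads off $\deg\varphi_1=\deg\varphi$ from the signed count; you instead use the omitted point $z_0$ only to note that both restrictions to each cap land in the contractible set $\S^2\setminus\{z_0\}$ and agree on the boundary circle, glue the resulting rel-boundary homotopies with the constant homotopy outside, and conclude by homotopy invariance. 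Both arguments are sound and rest on the same hypothesis (a common omitted value); yours avoids Sard and regular-value degree theory, the paper's avoids the (easy) gluing of homotopies. For the $H^1$ bound your estimate is essentially the paper's: difference supported on two caps of area $O(\delta^2)$, $\nabla_T\varphi_1=0$ on the inner caps, composition with the annulus diffeomorphism controlled by $\sup|\nabla_T\varphi|$, giving $\|\varphi-\varphi_1\|_{H^1}\le C(\varphi)\delta$. One point in your favour: since Definition~\ref{vp1} only posits \emph{some} smooth diffeomorphism from the annulus onto the punctured disc, the lemma really requires choosing it with differential bounded uniformly in $\delta$ (an arbitrary choice could have huge derivatives); you make this explicit via the radial rescaling $F(\rho,\theta)=(g(\rho),\theta)$ with $g'$ and $g(\rho)/\rho$ bounded and $g$ flat at $\rho=\delta$ (which also settles smoothness across the interface, since all pure $\theta$-derivatives of $\varphi$ in geodesic polar coordinates vanish at the center), whereas the paper absorbs this tacitly into its constant. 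So: correct, same skeleton for the estimate, a more topological and slightly more self-contained treatment of the degree and of the choice of $\varphi_1$.
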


\begin{proof}
By Sard's theorem (and the assumption that $\vp(B(q,2\delta))\cup \vp(B(-q,2\delta))$ is not of full measure) we may choose a regular value $y$ of $\vp_1$ such that $y\not\in\vp(B( 
\pm q,2\delta))$; by definition, the preimages of $y$ under $\varphi_1$ are the same as its preimages under $\vp$, so  that $\deg(\vp_1)=\deg(\vp)=0$. 
Since $\vp\in C^\infty(\S^2,\S^2)$, we have $\max_{x\in \S^2}|\nabla_T \vp(x)|<\infty$ and, as $\varphi\not\equiv\varphi_1$ only on the discs $B(\pm q, 2\delta)$ and $\nabla\varphi_1\equiv 0$ on $B(\pm q, \delta)$,
\begin{align}
%\begin{aligned}
 \frac 12 \norm{\vp-\vp_1}_{H^1(\partial\BB^3)}^2 \nonumber
& \le \int_{\S^2\cap B(q,2\delta)}|\nabla_T \vp(x)|^2 d\sigma + \int_{\S^2\cap(B(q,2\delta)\setminus B(q,\delta))}|\nabla_T \vp_1 (x)|^2\, d\sigma \\
&\quad{} + \int_{\S^2\cap B(-q,2\delta)}|\nabla_T \vp(x)|^2 d\sigma + \int_{\S^2\cap(B(-q,2\delta)\setminus B(-q,\delta))}|\nabla_T \vp_1 (x)|^2\, d\sigma \nonumber\\
& \quad{} + \frac 12 \h^2\big(\S^2\cap(B(q,2\delta)\cup B(-q,2\delta))\big)\cdot 2^2\label{normest2}\\
& \le 2(4\pi\delta^2 + 3\pi\delta^2)\, \max_{x\in\S^2}|\nabla_T\vp (x)|^2  + 16\pi\delta^2\nonumber\\ 
 &
\le 16\pi\delta^2\, \max_{x\in\S^2}(|\nabla_T \vp(x)|^2+1).\nonumber
%\end{align}
\end{align}
Thus choosing $\delta$ such that 
\[
 \delta<\frac{\ve}{4\sqrt{16\pi \max_{x\in\S^2} (|\nabla_T \vp(x)|^2+1)}}
\]
we obtain $\norm{\vp-\vp_1}_{H^1(\partial\BB^3)}<\frac{\ve}{4}$.

\end{proof}

We now fix $\varphi_1$ as above, and, perturbing it, define a new intermediate map $\varphi_2\colon \S^2\to \S^2$. Let $\alpha=4\arcsin \frac \delta 2$ denote the length  of the arc $\gamma \cap B(q,\delta)$, where $\gamma$ is any great circle through $q$. Without loss of generality suppose from now on that $q=(0,0,1)\in\S^2$. Roughly speaking, we are going to insert $2N$ appropriately small bubbles into $\varphi_1$, at points $\pm \xi_i$ close to $\pm q$, preserving the degree but forcing the minimizers to be singular at many points.

Recall also that $\dysk(a,2/j)\equiv B(a,2/j)\cap\S^2$ denotes a spherical cap centered at $a$.
\begin{defi}\label{phi2}
Let $\xi_i=\brac{0,\sin\brac{\frac{i\alpha}{N+1}}, \cos\brac{\frac{i\alpha}{N+1}}}\in B(q,\delta)$ for $i=1,\ldots,N$. For sufficiently large $j$'s, with $2/j \ll \delta/2N$, we define $\vp_2:\S^2\rightarrow\S^2$ as follows:
\begin{itemize}
 \item [(1)] $\vp_2(x)=[\vp_1]_{\xi_i,2/j}(x)$ for $x\in \dysk(\xi_i,2/j)$;
 \item [(2)] $\vp_2(x)=[\vp_1]_{\xi_i,2/j}(-x)$ for $x\in \dysk(-\xi_i,2/j)$;
 \item [(3)] $\vp_2\equiv \vp_1$ on $\S^2\setminus\left(\bigcup_{i=1}^N \dysk(\xi_i,2/j)\cup \dysk(-\xi_i,2/j)\right)$,
\end{itemize}
where $[\psi]_{a,b}$ is the modification of $\psi$ in the spherical cap $\dysk(a,b)$, see Definition \ref{singinstal}.
\end{defi}

Note that $\vp_2$ on each cap $\dysk(\xi_i,2/j)$ is either an orientation-preserving (degree $1$) or an orientation-reversing (degree $-1$) map onto $\S^2$, while on $\dysk(-\xi_i,2/j)$ it is of opposite orientation (respectively degree $-1$ or degree $1$) map onto $\S^2$. Since $\deg(\vp_1)=0$ we also have $\deg(\vp_2)=0$.

%\textcolor{red}{WYGLADZIC $\varphi_2$ w okolicy brzegow dyskow. Bez zmiany ogolnosci mozna to zrobic. Tw. o instalowaniu osobliwosci i tak gwarantuje, ze bedzie co najmniej $2N$ punktow osobliwych, bo szacowanie z co-area dziala.}

In the following lemma we will show that this procedure of inserting a single bubble to a map does not change the $W^{1,p}$ norm too much for $p<2$. 
\begin{lem}\label{phi1phi2}
Let $p<2$, then for each $N\in\N$, $\eps>0$ there is a (sufficiently large) $j$ such that $\norm{\nabla(\vp_1-[\vp_1]_{\xi_i,2/j})}_{L^p}<\frac{\eps}{8N}$ for each $i$.
\end{lem}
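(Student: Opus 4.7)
The plan is to exploit that $\varphi_1\equiv\varphi(q)$ on $\S^2\cap B(q,\delta)$, combined with the fact that the bubbles are localized in very small caps and the explicit Dirichlet energy bound built into Definition~\ref{singinstal}(d). Since $2/j\ll \delta/2N$, each cap $\dysk(\xi_i,2/j)$ sits well inside $\S^2\cap B(q,\delta)$, so $\nabla\vp_1\equiv 0$ there. By property (a) of Definition~\ref{singinstal}, $[\vp_1]_{\xi_i,2/j}$ agrees with $\vp_1$ outside this cap, so the gradient of the difference is supported on $\dysk(\xi_i,2/j)$ and equals $-\nabla[\vp_1]_{\xi_i,2/j}$ there. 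Hence
\[
\norm{\nabla(\vp_1-[\vp_1]_{\xi_i,2/j})}_{L^p(\S^2)}=\norm{\nabla[\vp_1]_{\xi_i,2/j}}_{L^p(\dysk(\xi_i,2/j))},
\]
and the whole question reduces to a local estimate on a cap of spherical radius $2/j$.

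Next, I would split that cap into the inner disc $\dysk(\xi_i,1/j)$ and the outer annulus $\ann(\xi_i;1/j,2/j)$ and estimate each piece separately. On the outer annulus, property (c) of Definition~\ref{singinstal} gives a Lipschitz bound $|\nabla[\vp_1]_{\xi_i,2/j}|\le L$ with $L$ independent of $j$; since the annular area is $O(j^{-2})$, the $L^p$ contribution is at most $L\cdot C j^{-2/p}$, which vanishes as $j\to\infty$ for every $p\ge 1$.

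The main estimate is on the inner cap. Here property (d) provides the crucial piece of information: the $L^2$ norm (Dirichlet energy) of $\nabla[\vp_1]_{\xi_i,2/j}$ on $\dysk(\xi_i,1/j)$ equals $8\pi+o(1)$, in particular is bounded uniformly in $j$. Applying H\"older's inequality with exponents $2/p$ and $2/(2-p)$ (both finite precisely because $p<2$),
\begin{equation*}
\int_{\dysk(\xi_i,1/j)}\!\!|\nabla[\vp_1]_{\xi_i,2/j}|^p\,d\sigma
\;\le\;\brac{\int_{\dysk(\xi_i,1/j)}|\nabla[\vp_1]_{\xi_i,2/j}|^2\,d\sigma}^{\!\!p/2}\!\!\cdot\bracb{\h^2(\dysk(\xi_i,1/j))}^{1-p/2}.
\end{equation*}
With the $L^2$ factor $O(1)$ and $\h^2(\dysk(\xi_i,1/j))=O(j^{-2})$, this yields
\[
\norm{\nabla[\vp_1]_{\xi_i,2/j}}_{L^p(\dysk(\xi_i,1/j))}\;\le\;C\,j^{-(2-p)/p}\longrightarrow 0\qquad\text{as }j\to\infty.
\]
Adding both pieces and taking $j$ large enough so that each term is below $\eps/(16N)$ completes the argument.

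The only sensitive point is that the exponent $(2-p)/p$ is strictly positive when $p<2$ and vanishes at $p=2$; at $p=2$ the H\"older step degenerates and only yields the uniform bound $8\pi+o(1)$ on the bubble energy, which cannot be absorbed into an $\eps$. This is consistent with the sharpness noted in the introduction: the whole construction breaks down in $W^{1,2}$. No further obstacle is expected — the argument is essentially a one-line H\"older estimate combined with the two quantitative properties (c) and (d) already built into Definition~\ref{singinstal}.
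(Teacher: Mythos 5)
Your argument is correct, and it takes a genuinely shorter route than the paper. You treat the bubble abstractly: since $\vp_1$ is constant on the small caps, the difference reduces to $\norm{\nabla [\vp_1]_{\xi_i,2/j}}_{L^p(\dysk(\xi_i,2/j))}$, and you then split into the inner cap, handled by H\"older with the uniform Dirichlet bound $8\pi+o(1)$ from Definition~\ref{singinstal}(d) against the $O(j^{-2})$ area, and the outer annulus, handled by the $j$-independent Lipschitz constant of (c); the exponent bookkeeping $\norm{\nabla\Phi}_{L^p}\lesssim j^{-(2-p)/p}$ is right and correctly degenerates only at $p=2$. The paper instead constructs a concrete realization of $[\vp_1]_{\xi_i,2/j}$ via stereographic projections and a rescaling, computes the $p$-energy of the main annular region explicitly (the integrals $I_1,I_2\to 0$), and uses H\"older plus conformality only on the remaining conformal and Lipschitz pieces. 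The reason for that extra work is that the proof of Lemma~\ref{phi1phi2} doubles as the place where the paper supplies the details of the existence of maps satisfying Definition~\ref{singinstal} (see the sentence after that definition and the Remark on smoothing right after the lemma); your argument instead takes properties (c) and (d) as given for whatever admissible bubble is chosen, which is legitimate since they are built into the definition, and it then proves the estimate for \emph{every} such realization rather than one explicit one. In short: your route buys brevity and generality, the paper's buys self-containedness (it verifies (d) rather than invoking it); if your proof were adopted, the explicit construction would still be needed somewhere (e.g.\ via the cited \cite[Appendix A.2]{ABL}) to know Definition~\ref{singinstal} is non-vacuous. One cosmetic point: conclude by taking the maximum over the finitely many indices $i$ to get a single $j$ valid "for each $i$", as the statement requires.
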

             
%\red{dowod w trakcie przerobek!!}

\begin{proof}
Fix $i\in\{1,\ldots,N\}$. Rotate $\S^2$ so that $\xi_i$ is the south pole $S=(0,0,-1)$, and work	in the spherical coordinates $(\phi,\theta)$, where $\phi\in[0,\pi]$ stands for the polar angle ($\phi=\pi$ corresponds to $S$)   and $\theta\in[0,2\pi]$ -- for the azimuthal angle.
	
Let $\Phi:=[\vp_1]_{S,2/j}$ on the spherical cap $\dysk(S,2/j)$. Without loss of generality we assume that $\Phi(S)=\vp_1(S)=(0,0,1)$. On the annulus $A(S;1/j,2/j)$ the map $\varphi_1$ is constant, and $\Phi$ is Lipschitz with a constant not depending on $j$. The main task is to estimate the $p$-energy of $\Phi$ on a smaller disk $D(S,1/j)$, blown by $\Phi$ onto a punctured sphere. For the sake of explicit estimates, we shall extract the formula for $\Phi$ on $\dysk(S,1/j)$.

In the spherical coordinates on $\S^2$ and the polar coordinates on the equatorial $\R^2$, the stereographic projection $\Phi_1$ is given by $\Phi_1(\phi,\theta) = (\cot\brac{\frac{\phi}{2}},\theta)$.  We have $D(S,1/j)=\{(\phi,\theta)\colon \gamma_j< \phi\le \pi\}$ with the latitude angle $\gamma_j=2\arccos \frac{1}{2j}$ on $\partial D(S,1/j)$. Thus, in the polar coordinates $(\rho,\vartheta)$ in $\R^2$,
\[
\Delta_j:= \Phi_1 (D(S,1/j))=\{(\rho,\vartheta)\colon 0\le \rho< d_j:=(4j^2-1)^{-1/2}\}\, .
\] 
We now set $\Phi\!\!\mid_{D(S,1/j)} = \Phi_2\circ \Phi_1\!\!\mid_{D(S,1/j)}$, where $\Phi_2$ sends an annulus $\tilde{A}_j\Subset\Delta_j\setminus\{0\}$  onto the whole $\S^2$ without two small caps (by rescaling $\tilde{A}_j$ and then applying $\Phi_1^{-1}$), and $\Phi_2$ is Lipschitz with an absolute constant on $\Delta_j\setminus\tilde{A}_j$ (so that the resulting map $\Phi$ satisfies all the requirements of Definition~\ref{singinstal}). 

Specifically, fix $0<\beta=\beta_j =  \frac{d_j}2$, and let $R=R_j=\cot \frac{\beta_j}2$ be the radius of the circle in $\R^2$ which is mapped to $\{\phi = \beta_j\}$ on $\S^2$ by the inverse stereographic projection $\Phi_1^{-1}$.  Set
\[
\lambda_j = \frac{R_j}{\beta_j}\, , 
\qquad r_j =\beta_j \tan^2\frac{\beta_j}2\, , 
\qquad\mbox{so that}\quad
\lambda_j r_j = \tan \frac{\beta_j}{2}=\cot \frac{\pi-\beta_j}{2}.
\] 
The circle $\partial B^2(0,\lambda_j r_j)\subset \R^2$ is mapped by the inverse stereographic projection to the latitude circle $\pi-\beta_j$ near the south pole in $\S^2$. Hence,
$
\tilde{A}_j:=\{(\rho,\vartheta)\colon r_j< \rho< \beta_j\}\subset \Delta_j
$
satisfies 
$
(\Phi_1^{-1} \circ \lambda_j\, \text{Id})\big(\tilde{A}_j\big) = \S^2\cap \{\beta_j< \phi< \pi-\beta_j\}$. We define the whole map $\Phi_2\colon \Delta_j \to \S^2$ by setting
\[
  \Phi_2(\rho,\vartheta) =
  \left\{ \begin{array}{ll}
     \brac{\pi-\rho \frac{\beta_j}{r_j}, \vartheta} & 
          \textrm{ for } 0\le \rho\le r_j,\\ [6pt]
     \brac{2\arccot\bigl(\frac{R_j}{\beta_j}\,\rho\bigr),\vartheta} & \textrm{ for } r_j< \rho < \beta_j \quad\mbox{(i.e., on $\tilde{A}_j$),}\\  [8pt]
     \brac{d_j-\rho, \vartheta} & \textrm{ for } \beta_j\le \rho< d_j.
  \end{array} \right.
\]
Finally, $\Phi=\Phi_2\circ \Phi_1$ on $D(S,1/j)$. 
%From now on, we fix $j$ and drop it from the notation. 
The key input to the $p$-energy of $\Phi$ comes from its behavior on  the annular region  $A^{(1)}_j := \Phi^{-1} \big(\S^2\cap \{\beta_j< \phi< \pi-\beta_j\}\big)$. A computation shows that  
\[
A^{(1)}_j = \ann\biggl(S; \frac{2r_j}{\sqrt{r_j^2+1}},\frac{2\beta_j}{\sqrt{\beta_j^2+1}}\biggr) \subset \S^2\, ;
\]
on this set, $\Phi$ is given by 
\[
\Phi(\phi,\theta) = 
\biggl( 
2\arccot\Bigl(\frac{R_j}{\beta_j}\cot \frac\phi 2 \Bigr),
\theta\biggr). 
\]
We estimate the $p$-energy of $\Phi$ on $A^{(1)}_j $ as follows, dropping the index $j$ next to $R,\, r,$ and $\beta$ for the sake of brevity:
 \begin{eqnarray*}     
  \lefteqn{
  \int_{A^{(1)}_j }|\nabla_T \Phi|^p d\sigma }\\
&=& \int_{2\arccot\beta}^{2\arccot r}\int_0^{2\pi} \brac{\brac{\frac{R\beta}{\beta^2\sin^2(\phi/2)+R^2\cos^2(\phi/2)}}^2 +\brac{\frac{1}{\sin\phi}}^2}^{p/2}\sin\phi\; d\theta \, d\phi \\
  &\le& 2\pi \int_{2\arccot\beta}^{2\arccot r} \brac{\brac{\frac{R\beta}{\beta^2+(R^2-\beta^2)\cos^2(\phi/2)}}^p+\sin^{-p}\phi}\sin\phi \; d\phi\; =:\; 2\pi \brac{I_1 + I_2}.
 \end{eqnarray*} 
\newcommand{\hide}[1]{}
\hide{
Changing the variables in a standard way, we obtain 
\[
 I_2=\frac12\brac{B_{(1-r^2)^2/(1+r^2)^2}\brac{\frac12,1-\frac{2}{p}}-B_{(1-\delta^2)^2/(1+\delta^2)^2}\brac{\frac12,1-\frac{2}{p}}},
\]
where $B_x(a,b)=\int_0^x s^{a-1}(1-s)^{b-1}\, ds$ is the incomplete beta function, see e.g. \cite[$\mathsection$8.17]{NIST:DLMF}. 
}        

Since $(ab)^{p/2} \le (a+b)^p$ for positive $a,b$, we have
\begin{align*}
 I_1 &\le \frac{2\cdot(R\beta)^p}{(\beta^2(R^2-\beta^2))^{p/2}} \int_{2\arccot\beta}^{2\arccot r} \frac{\sin\brac{\frac\phi2}}{\cos^{p-1}\brac{\frac\phi2}}d\phi\\
 &= \frac{4R^p}{(R^2-\beta^2)^{p/2}}
\int_{r/\sqrt{{r^2+1}}}^{\beta/\sqrt{{\beta^2+1}}}t^{1-p}\, dt\\
 &= \frac{4}{2-p}\frac{R^p}{(R^2-\beta^2)^{p/2}}
\brac{\brac{\frac{\beta^2}{\beta^2+1}}^{2-p}-\brac{\frac{r^2}{r^2+1}}^{2-p}}.
\end{align*}
Therefore, $I_1\rightarrow 0$ as $j\rightarrow\infty$, since $R=R_j\to \infty$ and $0< r=r_j<\beta=\beta_j\to 0$ (note that we use $1\le p<2$). Similarly,
$I_2\to 0$ as $j\to \infty$ by the absolute continuity of the integral, since $\int_0^\pi \sin^{1-p} \phi\, d\phi$ converges for $p<2$, and both endpoints of $I_2$ go to $\pi$ as $j\to \infty$.

Now to estimate the difference $\int_{\S^2}|\nabla_T(\vp_1-[\vp_1]_{S, 2/j})|^p d\sigma$ we note the following:
\begin{itemize}
 \item[(a)] $\vp_1$ and $[\vp_1]_{S,2/j}$ differ only on the spherical cap $\dysk(S,\frac2j)$ and $\vp_1$ is constant on that cap;
 \item[(b)] $\Phi$ is a composition of a conformal map ($\Phi_1$) and a Lipschitz map ($\Phi_2$) with constant $\beta_j/ r_j$ on the spherical cap $\dysk_{j}:=\dysk(S,{2r_j}/{\sqrt{r_j^2+1}})$;
\item[(c)] The $p$-energy of $\Phi$ on $A^{(1)}_j$ goes to zero as $j\to \infty$,
\item[(d)]$\Phi$ is a composition of a conformal map ($\Phi_1$) and a Lipschitz map ($\Phi_2$) with constant $1$ on the annular region $A^{(2)}_j:=\ann(S;{2\beta_j}/{\sqrt{\beta_j^2+1}},\frac1j)$;
\item[(e)] $\Phi=[\vp_1]_{S,2/j}$ is Lipschitz on $A^{(3)}_j=\ann(S;\frac1j,\frac2j)$ with constant $L$ dependent only on $\vp_1$ and not on $j$.
\end{itemize}
All this yields
\begin{align*}
 \int_{\S^2} |\nabla_T(\vp_1-[\vp_1]_{S, 2/j})|^p d\sigma 
&\le \biggl(\int_{D_j} + \int_{A^{(1)}_j} + \int_{A^{(2)}_j} +\int_{A^{(3)}_j}\biggr)\; |\nabla_T \Phi|^p d\sigma \\
 &\le 
\brac{\frac{\beta_j}{r_j}}^p \brac{2\h^2\brac{\Delta_j}}^{ p/2}\cdot\brac{\h^2\brac{D_j}}^{(2-p)/{2}} 
+ \int_{A^{(1)}_j}  |\nabla_T \Phi|^p d\sigma \\
 &\quad{}+ 
\brac{2\h^2\bracb{\Phi_1\bracb{A^{(2)}_j}}}^{p/2}
\cdot\h^2 \big(A^{(2)}_j\big)^{(2-p)/{2}} 
+ L^p\h^2\bracb{A^{(3)}_j}\,  
\end{align*}   
(we have applied H\"older's inequality and conformality of $\Phi_1$ to estimate the integrals over $D_j$ and $A^{(2)}_j$).
One easily sees that all the terms above converge to 0 as $j\rightarrow\infty$. Thus, we can choose $j$ such that 
\begin{equation}\label{norm12}
\int_{\S^2} |\nabla_T(\vp_1-[\vp_1]_{S, 2/j})|^p d\sigma < \brac{\frac{\eps}{8N}}^p\, , 
\end{equation}
%\qquad\mbox{and} \qquad\int_{\S^2} |\vp_1-[\vp_1]_{S,2/j}|^p d\sigma<\brac{\frac{\eps}{12 N}}^p,$$
thereby concluding the proof.
\end{proof}

\begin{rem}
 The map $[\vp_1]_{\xi_i,2/j}$ given in the proof of the above lemma by an exact formula is not smooth. In order to obtain a smooth map we choose an approximation $\widetilde{\Phi}$ of $\Phi$, so that $\widetilde{\Phi}\in C^{\infty}(\S^2,\S^2)$, the degree of $\Phi$ remains the same and the inequality \eqref{norm12} holds true.
 %We recall that by \cite[Theorem jakis tam czy Bethuel?]{BethuelZheng} the space $C^{\infty}(\S^2,\S^2)$ is dense in $W^{1,p}(\S^2,\S^2)$ for $1\le p \le2$.
\end{rem}

\begin{lem}\label{lemourphi} Fix $\delta_1>0$ sufficiently small.
One may modify $\vp_2:\S^2\rightarrow\S^2$ in a spherical cap of radius $\delta_1$, located away from all $\dysk(\xi_i,2/j)$, obtaining a new map $\vp_3\in C^\infty(\S^2,\S^2)$ such that $\norm{\vp_2-\vp_3}_{H^1(\partial\BB^3)}<10 \delta_1$ and $\deg(\vp_3)=0$, for which there is exactly one minimizer $\w{u}:\BB^3\rightarrow\S^2$ with $\w u\mid_{\partial \BB^3}= \vp_3$. 
\end{lem}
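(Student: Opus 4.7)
The plan is to apply Theorem \ref{uniqueness} of Almgren and Lieb at a point $q^*\in\S^2$ lying far from all the bubble regions, and then to verify smoothness of the resulting trace together with preservation of zero degree. Pick any $q^*\in\S^2$ whose Euclidean distance from both $\pm q$ is at least $1/2$; then for every $\delta_1$ sufficiently small the spherical cap $\dysk(q^*,2\delta_1)$ is disjoint from $\bigcup_{i=1}^N\bigl(\dysk(\xi_i,2/j)\cup \dysk(-\xi_i,2/j)\bigr)$, and $\vp_2\equiv\vp_1$ is smooth in a neighborhood of $\overline{\dysk(q^*,2\delta_1)}$.

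Applying Theorem \ref{uniqueness} to $\vp_2$ at $q^*$ with parameter $\ve=\delta_1$ produces a boundary map $\vp_3$ which coincides with $\vp_2$ outside $\dysk(q^*,\delta_1)$, satisfies $\|\vp_2-\vp_3\|_{H^1(\partial\BB^3)}\le\delta_1<10\delta_1$, and admits a unique minimizer $\w u\in W^{1,2}_{\vp_3}(\BB^3,\S^2)$. As seen from the proof of Theorem \ref{uniqueness} (via Lemma \ref{unilem}), $\vp_3$ may be realized as the $\partial\BB^3$-trace of the restriction to $\BB^3$ of a minimizer $u^*$ on a slightly enlarged domain $\Omega^*\supset\BB^3$ with $\Omega^*\setminus\BB^3\subset B(q^*,\delta_1)$, with a smooth extension of $\vp_2$ used as boundary data on $\partial\Omega^*$.

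To check that $\vp_3\in C^\infty(\S^2,\S^2)$, note that the boundary data on $\partial\Omega^*$ is smooth, so Theorem \ref{UBC} (Schoen--Uhlenbeck boundary regularity) guarantees that $u^*$ is smooth on a fixed-size neighborhood of $\partial\Omega^*$. By the interior regularity theorem of Schoen--Uhlenbeck combined with the Almgren--Lieb bound on the number of interior singularities in terms of the boundary energy, $u^*$ has only finitely many isolated interior singularities, all bounded away from $\partial\Omega^*$. For $\delta_1$ small enough, $\partial\BB^3\cap\overline{B(q^*,\delta_1)}$ is compactly contained in the boundary-regularity neighborhood; by further adjusting $\delta_1$ by an arbitrarily small amount we may avoid the finite set of radii for which one of the singularities lies on $\partial\BB^3\cap\partial B(q^*,\delta_1)$. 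Consequently $\w u$ is smooth in a neighborhood of $\partial\BB^3\cap\overline{B(q^*,\delta_1)}$, and thus $\vp_3$ is smooth.

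For the degree, boundary regularity furnishes a Lipschitz bound for $u^*$ on a neighborhood of $\partial\Omega^*$ with Lipschitz constant $L_0$ depending only on $\vp_2$, so $\vp_3\bigl(\dysk(q^*,\delta_1)\bigr)$ lies in the geodesic ball around $\vp_2(q^*)\in\S^2$ of radius $\le L_0\delta_1$, which is a proper subset of $\S^2$ for $\delta_1<\pi/L_0$. By Sard's theorem pick a common regular value $y\in\S^2$ of $\vp_3$ and $\vp_2$ lying outside this image; then $\vp_3^{-1}(y)=\vp_2^{-1}(y)\subset\S^2\setminus\dysk(q^*,\delta_1)$, and on this common preimage $\vp_3\equiv\vp_2$, so $\deg\vp_3=\deg\vp_2=0$. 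The main obstacle is the smoothness verification, which relies on carefully combining Schoen--Uhlenbeck boundary regularity with a dimension-counting argument in $\delta_1$ to keep the singularities of $u^*$ strictly off $\partial\BB^3$.
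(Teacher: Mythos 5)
Your overall route is the same as the paper's: realize $\vp_3$ as the trace on $\partial\BB^3$ of a minimizer $u^*$ on a slightly enlarged domain (the Almgren--Lieb bump), invoke Lemma~\ref{unilem} for uniqueness, use boundary regularity for smoothness and for the smallness of the image of the modified cap, and compare degrees at a regular value away from that image. The genuine gap is in the regularity step. Theorem~\ref{UBC} is a quantitative statement: it requires every partial derivative up to order $3$ of the boundary data (and of the graph function) to be at most $\ve^2$ for a \emph{universal} $\ve$. With your choice of $q^*$ at distance at least $1/2$ from $\pm q$, the boundary data near $q^*$ is the arbitrary smooth map $\vp$ itself, so these hypotheses are not satisfied, and you cannot conclude as stated that $u^*$ is $C^2$ on a neighborhood of $\partial\Omega^*$ whose size is independent of $\delta_1$, nor that it is Lipschitz there with a constant $L_0$ depending only on $\vp_2$ --- and this uniform collar is exactly what your smoothness claim, your degree argument, and (implicitly) your $H^1$ bound rest on. The step can be repaired by a rescaling argument (fix $r_0=r_0\bigl(\norm{\vp}_{C^3}\bigr)$, blow up $B(q^*,r_0)$ to unit scale, then apply Theorem~\ref{UBC}), but you do not carry it out; the paper sidesteps the issue entirely by placing $q^*$ inside $B(q,\delta)$, where $\vp_2\equiv\vp(q)$ is constant, so the boundary data has vanishing derivatives, the bump $\delta_1^5\eta(\cdot/\delta_1)$ has derivatives bounded by $\delta_1^2$, and Theorem~\ref{UBC} applies verbatim, uniformly as $\delta_1\to 0$; smoothness of $\vp_3$ then follows from the $C^2$ conclusion by a Schauder bootstrap, a step you also skip. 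The constancy of $\vp_2$ on the modified set is likewise what makes the paper's degree comparison and $H^1$ estimate immediate.

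Your fallback device for keeping the singularities of $u^*$ off $\partial\BB^3\cap B(q^*,\delta_1)$ --- adjusting $\delta_1$ to avoid ``the finite set of radii for which a singularity lies on $\partial\BB^3\cap\partial B(q^*,\delta_1)$'' --- does not work and is aimed at the wrong set: the trace $\vp_3$ is ruined if a singular point lies anywhere on the two-dimensional cap $\partial\BB^3\cap B(q^*,\delta_1)$, not merely on its boundary circle, and since changing $\delta_1$ changes $\Omega^*$ and hence the minimizer $u^*$ itself, the singular points move with $\delta_1$ and there is no fixed finite bad set to avoid. The correct (and only needed) mechanism is the uniform boundary-regularity collar: once Theorem~\ref{UBC} is applicable (after the rescaling above, or with the paper's placement of $q^*$), $u^*$ is regular in a region whose thickness does not degenerate as $\delta_1\to0$, while the cap lies within distance $\delta_1^5$ of $\partial\Omega^*$, so no singular point can approach it. Two smaller points: in your degree argument you must choose the regular value $y$ outside $\vp_2\bigl(\dysk(q^*,\delta_1)\bigr)$ as well as outside $\vp_3\bigl(\dysk(q^*,\delta_1)\bigr)$ (with your $q^*$ the map $\vp_2$ is not constant on the cap, so otherwise $\vp_2^{-1}(y)$ could contain extra points there); both images lie in a ball of radius $O(\delta_1)$ about $\vp_2(q^*)$, so this is easily arranged. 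Also, placing the cap far from $\pm q$ adds a third region where $\vp\ne\w\vp$; the measure and $W^{1,p}$ estimates in the proof of Theorem~\ref{main:thm} would still survive this, but it departs from the paper's construction, in which all modifications are confined to the two caps centered at $\pm q$.
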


We essentially follow Almgren and Lieb's  proof of Theorem \ref{uniqueness}; the only important difference is that we have to make sure that our $\varphi_3$ is of degree 0. For the sake of completeness we state the argument in full.

\begin{proof}
{We extend  the ball $\BB^3$ slightly to obtain a new smooth domain $\Omega\varsupsetneq\BB^3$, so that $\Omega$ coincides with $\BB^3$ except in a ball $B(q^*,\delta_1)$, where $\delta_1<\frac{1}{4}\text{dist}(q,q^*)$ and, to fix the ideas, we choose $$q^*=\brac{0,-\sin\brac{\frac{\alpha}{2N}},\cos\brac{\frac{-\alpha}{2N}}} $$ 
away from all the $\xi_i$ and from the caps where the bubbles are inserted into $\varphi_1$. Roughly speaking, the new $\Omega$ is the union of $\BB^3$ and  of a  tiny and very flat bump of width $2\delta_1$ and height $\delta_1^5$, which is added close to $q^\ast$.  It is convenient to imagine $\partial \Omega$ as the graph of a smooth nonnegative function $\theta\colon \S^2\to [0,\infty)$ such that $\theta$ vanishes on $\S^2\setminus B(q^\ast,\delta_1)$ and close to $q^\ast$, after we flatten the sphere locally, \[\theta(\cdot)= \delta_1^5\eta\brac{\frac{\cdot}{\delta_1}}:\R^2\rightarrow [0,\infty),\]
where $\eta$ is a smooth nonnegative cutoff function supported in the unit disc with $\eta(0)>0$. Formally, we let $T:\S^2\setminus\{-q^*\}\rightarrow\R^2$ be a stereographic projection such that $T(q^*)=0$ and set
\[
 \Omega=\BB^3\cup\left\{y:T(\Pi(y))\in B(0,\delta_1)\subseteq\R^2 \text{ and } \text{dist}(y,\S^2)<\delta_1^5\eta\left(\frac{T(\Pi(y))}{\delta_1}\right)\right\}\, ,
\]
where $\Pi$ stands for the nearest point projection from $\partial\Omega$ to $\partial\BB^3$.
Multiplying $\eta$ by a positive constant, we may obviously assume that each partial derivative up to order 3 of $\delta_1^5\eta\brac{\frac{\cdot}{\delta_1}}$ does not exceed $\delta_1^2$ in absolute value. }

\begin{figure}[!t]
  \includegraphics[width=6cm]{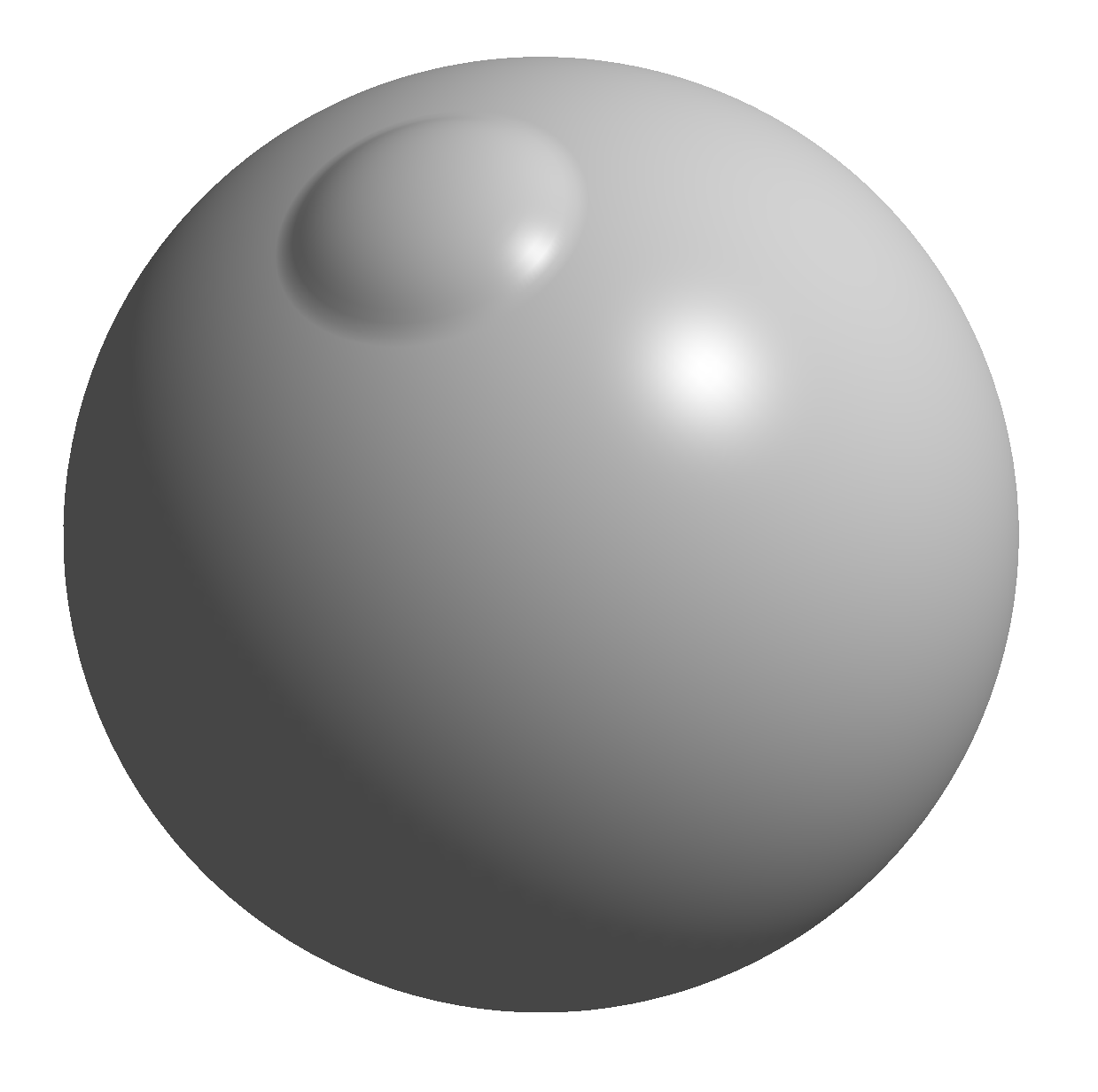}
  \caption{$\Omega$ is the union of a ball and a small flat bump.} \label{babel}
\end{figure}

Next we define a new mapping on the boundary of $\Omega$, $\vp^*:\partial\Omega\rightarrow\S^2$, by setting $\vp^*(x)=\vp_2(\Pi(x))$. By this definition we have $\vp^*\equiv\vp(q)$ on $B(q^*,4\delta_1)\cap \partial\Omega$. In particular, each partial derivative of $\vp^*$ is equal to 0 on that set and therefore does not exceed $\delta_1^2$ in absolute value. 

Let $u^*:\Omega\rightarrow\S^2$ be any minimizer for $\vp^*$. Then, $u^*|_{\BB^3}:\BB^3\rightarrow\S^2$ is the unique minimizer for its boundary mapping $\vp_3:=u^*|_{\partial\BB^3}$ by Lemma \ref{unilem}.
Note that by Theorem~\ref{UBC} $u^*$ is of class $C^2$ up to the boundary of $B(q^\ast,2\delta_1)\cap \Omega$.  This regularity assertion can easily be improved. To this end, we fix any smooth bounded domain $V\subset B(q^\ast,2\delta_1)\cap \Omega$ with, say, $V\supset \Omega\cap B(q^\ast,\frac 32 \delta_1)$, and with a $C^\infty$ boundary $\partial V\supset  B(q^\ast,\frac 32 \delta_1)\cap \partial\Omega$. An easy inductive argument using linear Schauder theory, see \cite[Thm.~6.19]{GT}, applied to $u^\ast\!\!\mid_V$ and the elliptic system $-\Delta u=|\nabla u|^2 u\equiv f$ on $V$, shows that in fact $u^\ast$ is of class $C^\infty (\overline V)$. Therefore $\varphi_3$ is of class $C^\infty(\S^2,\S^2)$.

 Next we show that $\deg(\vp_3)=0$. By the Uniform Boundary Regularity Theorem \ref{UBC}, the energy minimizer $u^*$ is two times continuously differentiable  at least on $B(q^*,\delta_1)$ and each of its partial derivatives does not exceed $\delta_1$, so that $|u^*(x)-u^*(y)|\le \sqrt{3}\, \delta_1 |x-y|$ by the mean value theorem. Thus, if $x\in\partial\Omega\cap B(q^*,\delta_1)$ and $y\in\S^2\cap B(q^*,\delta_1)$, then \[|\vp(q)-\vp_3(y)|=|u^*(x)-u^*(y)|\le \sqrt{3}\, \delta_1|x-y|< 4\delta_1^2\, .\]
To compute the degree of $\vp_3$, choose any regular value of $\vp_3$ away from $\S^2\cap B\bigl(\vp(q),4\delta_1^2\bigr)$. Its preimages under $\vp_3$ will be the same as those under $\vp_2$. Thus, the degree of $\vp_3$ must be the same as that of $\vp_2$, i.\,e., equal to zero. 

Finally, since by Theorem \ref{UBC} each partial derivative of $u^*\mid_{\partial\BB^3}=\varphi_3$ does not exceed $\delta_1$ on  $B(q^*,\delta_1)$, and on the set $\{\vp_2\ne\vp_3\}$ the mapping $\vp_2$ is constant, we have the estimate 
\begin{align*}
 \norm{\vp_2-\vp_3}_{H^1(\partial\BB^3)}^2 & = \int_{\{\vp_2\ne\vp_3\}}\Big(\left|\nabla_T \vp_2- \nabla_T\vp_3\right|^2 +
\left|\vp_2- \vp_3\right|^2\Big)\, d\sigma\\
& \le  2 \int_{\{\vp_2\ne\vp_3\}} \left|\nabla_T\vp_3\right|^2\, d\sigma +2^2 \h^2(\{\vp_2\ne\vp_3\}) \\ & 
%<  2\int_{B(p^*,\delta_1)}\left|\nabla u^*\right|^2 dx +4\pi \delta_1^2 
< 10\pi\delta_1^2\qquad\mbox{for $\delta_1<1$.}
 \end{align*}
Therefore, for sufficiently small $\delta_1$ we conclude that
$\norm{\vp_2-\vp_3}_{H^1(\partial\BB^3)}<10\delta_1$.
\end{proof}

\begin{proof}[Proof of Theorem \ref{main:thm}] It remains to check that the mapping $\vp_3$ given in Lemma \ref{lemourphi} has the properties (i)--(iii). 
	%and then to finish the proof we will choose an approximation $\w\vp$ of $\vp_3$ so that $\w\vp\in C^\infty(\S^2,\S^2)$.

\smallskip
\noindent
(i) and (iii): By Lemma \ref{lemourphi}, a minimizer $u_3$ for the boundary condition $\vp_3$ is unique and of degree 0. The proof that $u_3$ has at least $2N$ singularities is essentially the same as in Theorem \ref{sing}, therefore we skip it. 

 \smallskip
 \noindent
 (ii): Fix $\ve>0$. We now attune $\delta, \, \delta_1$ and $j$ to obtain $\norm{\vp-\vp_3}_{W^{1,p}}<\ve$. We first choose $\delta>0$ as in the proof of Lemma \ref{vp1estimation}, then $j$ as in the proof of Lemma \ref{phi1phi2}. Next, we fix $\delta_1<\frac{1}{10}$. Finally, we recall that $\vp$ differs from $\vp_3$ only on the two spherical caps $\S^2\cap B(\pm q,2\delta)$ whose $\h^2$ measure is $8\pi\delta^2$, shrinking $\delta$ if necessary we obtain $\h^2(\{x\in\S^2:\vp(x)\ne\vp_3(x)\})<\frac{\ve}{4}$ and hence
\begin{align*}
 \norm{\vp-\vp_3}_{W^{1,p}}&<\norm{\vp-\vp_3}_{L^p}+\norm{\nabla(\vp-\vp_1)}_{L^2}\cdot\brac{\h^2(\{\vp\ne\vp_1\})}^{\frac{2-p}{2p}}\\
 &\quad+\norm{\nabla(\vp_1-\vp_2)}_{L^p}+\norm{\nabla(\vp_2-\vp_3)}_{L^2}\cdot\brac{\h^2(\{\vp_2\ne\vp_3\})}^{\frac{2-p}{2p}}\\
 &< \frac{\ve}{4} + \frac\ve4\cdot{\brac{\frac\ve4}}^{\frac{2-p}{2p}} + 2N\cdot\norm{\nabla(\vp_1-[\vp_1]_{\xi_i,2/j})}_{L^p} + 10\delta_1\cdot \frac{\ve}{4}<  \ve.
 \end{align*}
\end{proof}

\begin{rem}\label{opt}
 Theorem \ref{main:thm} does not hold if we replace the norm $W^{1,p}(\S^2, \S^2)$ for $1\le p<2$ by $W^{1,2}(\S^2,\S^2)$. 
\end{rem}

\begin{proof}
 Let $\psi:\S^2\rightarrow\S^2$ be a constant map. For this boundary condition of degree 0 there exists exactly one minimizer $u:\BB^3\rightarrow\S^2$, $u\equiv const$ for which the Lavrentiev gap phenomenon does not hold. If we modify the boundary map into $\widetilde{\psi}$, without changing its degree, so that the gap phenomenon would hold we would have to install at least 2 singular points into each minimizer. By \cite[Theorem 2.12]{AL} the number of singularities is bounded by a universal constant $C_{AL}$ times the boundary energy, which in our case would give $\int_{\S^2}|\nabla_T\widetilde{\psi}|^2d\sigma\ge\frac{2}{C_{AL}}$. Therefore the modified boundary mapping $\widetilde{\psi}$ with singularities for each corresponding minimizer cannot be arbitrary close to the constant map $\psi$ in the $W^{1,2}$ norm.
\end{proof}

\section{A remark on the nonuniqueness in the class of minimizing mappings}

In the following we explain how the boundary mapping constructed in Theorem~\ref{main:thm} leads to a nonuniqueness example, similar (in the construction) to that of \cite[Section 5]{HL}.

\begin{rem}
Fix any $M\in \N$. There exist a mapping $\varphi_\tau\in C^\infty(\S^2,\S^2)$, $\deg(\varphi_\tau)=0$, which serves as a boundary data for at least two energy minimizing maps from $\BB^3$ to $\S^2$ having different number of singularities (one of them at most $M$; the other one at least $M+2$). 
\end{rem}

%\begin{proof}
 Indeed, let $\psi\in C^\infty(\S^2,\S^2)$ be any mapping having exactly $M\in\N$ singular points such that $\deg(\psi)=0$ and for which there exists unique energy minimizer $w\in W^{1,2}(\BB^3,\S^2)$. We construct $\w{\psi}\in C^\infty(\S^2,\S^2)$ as in Theorem \ref{main:thm} for which $\deg(\w{\psi})=0$ and there exists precisely one energy minimizing mapping $\w{w}\in W^{1,2}(\BB^3,\S^2)$ with at least $M+2$ singularities.
 
 Since the mappings $\psi$ and $\w\psi$ are homotopic, there exist a smooth family of smooth mappings $\{\varphi_t\}_{t\in[0,1]}$ such that $\varphi_0=\psi$ and $\varphi_1=\w{\psi}$.
 
 From the Stability Theorem obtained in \cite{HL2} we deduce that for $t$ sufficiently close to $0$ each energy minimizer with boundary data $\varphi_t$ has exactly $M$ singular points. Let
\begin{multline*}
 \tau=\sup\left\{t\in [0,1]: \text{ each energy minimizer with boundary data } \varphi_t \right.\\
\left.\mbox{ has at most } M \text{ singular points in $\BB^3$}\right\}.
 \end{multline*}
Then, $0<\tau<1$. We may choose a sequence $s_i\nearrow\tau$ and a sequence of energy minimizing maps $u_i\in W^{1,2}(\BB^3,\S^2)$ having at most $M$ singular points such that $u_i\!\mid_{\S^2}=\varphi_{s_i}$. Similarly we choose $t_i\searrow\tau$ with a sequence of minimizing mappings $v_i\in W^{1,2}(\BB^3,\S^2)$ having at least $M+2$ singularities,
%(possibly all $t_i=\tau$), 
$v_i\!\mid_{\S^2}=\varphi_{t_i}$. (Since we consider boundary maps of degree zero, and it is known that the degree of a minimizing harmonic map on a small sphere around a singular point is $\pm 1$, the number of singular points must jump at least by 2.) Passing to subsequences without changing notation, we obtain $u_i\rightarrow u$ and $v_i\rightarrow v$, the convergence is strong in $W^{1,2}$ and $u\!\mid_{\S^2}=\varphi_\tau=v\!\mid_{\S^2}$.

The mapping $u$ has at most $M$ singularities. (It is plausible that one might prove that the number of singularities equals $M$, by choosing the homotopy appropriately.) Indeed, assume $u$ has at least $M+2$ singular points. Then, by \cite[Theorem 1.8 (2)]{AL}, in an arbitrarily small ball around each singularity of $u$ there would be a singularity of $u_i$ for $i$ sufficiently large, a contradiction. 

On the other hand, $v$ has at least $M+2$ point singularities. Recall that each $v_i$ has at least $M+2$ singularities and again by \cite[Theorem 1.8]{AL} we know that singular points converge to singular points. To see that $v$ has at least $M+2$ singularities we must exclude the possibility that some singularities of the $v_i$'s come together and cancel. By \cite[Theorem 2.1]{AL} there exists a universal constant $C$ such that if $d$ denotes the distance from a singularity $a$ to the boundary of the ball then there is no other singularity within distance $Cd$ from $a$. Thus, the singularities of $v_{i}$ cannot merge in the interior of $\BB^3$. Moreover,  by Theorem \ref{UBC},  there is a neighborhood of the boundary which contains no singularities of $v$ and of the $v_i$'s sufficiently close to $v$ (as the $\vp_{t_i}$'s and $\vp_\tau$ are close to each other in $C^\infty$). This precludes the case of singularities merging in the limit at the boundary.    

\subsection*{Acknowledgement.} The work of both authors has been partially supported by the NCN grant no. 2012/07/B/ST1/03366.

\bibliographystyle{plain}
\bibliography{bibliografia}

\bigskip
\noindent
{\sc Katarzyna Mazowiecka}\\
Instytut Matematyki\\
Uniwersytet Warszawski\\
ul. Banacha 2\\
PL-02-097 Warsaw \\
POLAND\\
E-mail: {\tt K.Mazowiecka@mimuw.edu.pl}

\vspace{.5cm}

\noindent
{\sc Pawe\l{} Strzelecki}\\
Instytut Matematyki\\
Uniwersytet Warszawski\\
ul. Banacha 2\\
PL-02-097 Warsaw \\
POLAND\\
E-mail: {\tt pawelst@mimuw.edu.pl}
\end{document}